\documentclass[12pt,a4paper]{article}
\usepackage{amssymb}
\usepackage{amsmath}
\usepackage{amsfonts}

\setcounter{MaxMatrixCols}{10}

\newtheorem{theorem}{Theorem}

\newtheorem{corollary}{Corollary}
\newtheorem{definition}{Definition}

\newtheorem{lemma}{Lemma}

\newenvironment{proof}[1][Proof]{\noindent\textbf{#1.} }{\ \rule{0.0em}{0.0em}}
\input{tcilatex}
\begin{document}

\title{\textbf{Bicomplex Version of Lebesgue's Dominated Convergence Theorem
and Hyperbolic Invariant Measure}}
\author{Chinmay Ghosh$^{1}$, Soumen Mondal$^{2}$ \\
$^{1}$Department of Mathematics\\
Kazi Nazrul University\\
Nazrul Road, P.O.- Kalla C.H.\\
Asansol-713340, West Bengal, India \\
chinmayarp@gmail.com \\
$^{2}$28, Dolua Dakshinpara Haridas Primary School\\
Beldanga, Murshidabad\\
Pin-742133\\
West Bengal, India\\
mondalsoumen79@gmail.com}
\date{}
\maketitle

\begin{abstract}
In this article we have studied bicomplex valued measurable functions on an
arbitrary measurable space. We have established the bicomplex version of
Lebesgue's dominated convergence theorem and some other results related to
this theorem. Also we have proved the bicomplex version of
Lebesgue-Radon-Nikodym theorem. Finally we have introduced the idea of
hyperbolic version of invariant measure.

\textbf{AMS Subject Classification }(2010) : 28E99, 30G35.

\textbf{Keywords and Phrases}: Bicomplex measurable function; Bicomplex
Lebesgue integrable function; Hyperbolic invariant measure. .
\end{abstract}

\section{Introduction}

In $1882$ Corrado Segre $\cite{Seg}$ introduced a new number system called
bicomplex numbers. Unlike quaternions this number system is a commutative
generalization of complex numbers by four reals. The book of G. B. Price $%
\cite{Pr}$ is a good resource of the analysis of bicomplex numbers. Many
works have been done on bicomplex functional analysis. Few researchers have
worked on bicomplex dynamics, bicomplex topological modules. In this article
we have studied bicomplex valued measurable functions on an arbitrary
measurable space. We have established the bicomplex version of Lebesgue's
dominated convergence theorem and some other results related to this
theorem. Also we have proved the bicomplex version of Lebesgue-Radon-Nikodym
theorem. Finally we have introduced the idea of hyperbolic version of
invariant measure. To prove the results in the first two subsections in our
main results we have used the ideas of the book of W. Rudin $\cite{Ru}$ and
for the results in the last subsection we have used $\cite{Luz}$ and $\cite%
{Via}.$

\section{Basis definitions}

We denote the set of real and complex numbers by $\mathbb{R}$ and $\mathbb{C}
$ respectively. We may think three imaginary numbers $\mathbf{i}_{1},\mathbf{%
i}_{2}$ and $\mathbf{j}$ governed by the rules%
\begin{equation*}
\mathbf{i}_{1}^{2}=-1,\mathbf{i}_{2}^{2}=-1,\mathbf{j}^{2}=1
\end{equation*}%
\begin{eqnarray*}
\mathbf{i}_{1}\mathbf{i}_{2} &=&\mathbf{i}_{2}\mathbf{i}_{1}=\mathbf{j} \\
\mathbf{i}_{1}\mathbf{j} &=&\mathbf{ji}_{1}=-\mathbf{i}_{2} \\
\mathbf{i}_{2}\mathbf{j} &=&\mathbf{ji}_{2}=-\mathbf{i}_{1}.
\end{eqnarray*}%
Then we have two complex planes $\mathbb{C}\left( \mathbf{i}_{1}\right)
=\left\{ x+\mathbf{i}_{1}y:x,y\in \mathbb{R}\right\} $ and $\mathbb{C}\left( 
\mathbf{i}_{2}\right) =\left\{ x+\mathbf{i}_{2}y:x,y\in \mathbb{R}\right\} ,$
both of which are identical to $\mathbb{C}.$ Bicomplex numbers are defined
as $\zeta =z_{1}+\mathbf{i}_{2}z_{2}$ for $z_{1},z_{2}\in \mathbb{C}\left( 
\mathbf{i}_{1}\right) $. The set of all bicomplex numbers is denoted by $%
\mathbb{T}$. In particular if $z_{1}=x,z_{2}=\mathbf{i}_{1}y$ where $x,y\in 
\mathbb{R}$ we get $\zeta =x+\mathbf{j}y$ and these type of numbers are
called hyperbolic numbers or duplex numbers. The set of all hyperbolic
numbers is denoted by $\mathbb{D}$. For $\left( z_{1}+\mathbf{i}%
_{2}z_{2}\right) ,\left( w_{1}+\mathbf{i}_{2}w_{2}\right) \in \mathbb{T},$
the addition and multiplication are definde as%
\begin{eqnarray*}
\left( z_{1}+\mathbf{i}_{2}z_{2}\right) +\left( w_{1}+\mathbf{i}%
_{2}w_{2}\right) &=&\left( z_{1}+w_{1}\right) +\mathbf{i}_{2}\left(
z_{2}+w_{2}\right) \\
\left( z_{1}+\mathbf{i}_{2}z_{2}\right) \left( w_{1}+\mathbf{i}%
_{2}w_{2}\right) &=&\left( z_{1}w_{1}-z_{2}w_{2}\right) +\mathbf{i}%
_{2}\left( z_{1}w_{2}+z_{2}w_{1}\right) .
\end{eqnarray*}%
With these operations $\mathbb{T}$ forms a commutative ring with zero
divisors. The elements $z_{1}+\mathbf{i}_{2}z_{2}\in \mathbb{T}$ such that $%
z_{1}^{2}+z_{2}^{2}=0$ are the zero divisors. The interesting property of a
bicomplex number is its idempotent representation. Setting $\mathbf{e}_{1}=%
\frac{1+\mathbf{j}}{2}$ and $\mathbf{e}_{2}=\frac{1-\mathbf{j}}{2},$ we get%
\begin{equation*}
z_{1}+\mathbf{i}_{2}z_{2}=\left( z_{1}-\mathbf{i}_{1}z_{2}\right) \mathbf{e}%
_{1}+\left( z_{1}+\mathbf{i}_{1}z_{2}\right) \mathbf{e}_{2}.
\end{equation*}%
Many calculations become easier for this representation.

Throughout this article we will consider $\mathfrak{M}$ to be a $\sigma -$%
algebra in a set $X,$ unless stated otherwise.

\subsection{Partial order on $\mathbb{D}$}

The set of nonnegative hyperbolic numbers is%
\begin{equation*}
\mathbb{D}^{+}=\left\{ \nu _{1}\mathbf{e}_{1}+\nu _{2}\mathbf{e}_{2}:\nu
_{1},\nu _{2}\geq 0\right\} .
\end{equation*}%
A hyperbolic number $\zeta $ is said to be $\left( \text{strictly}\right) $
positive if $\zeta \in \mathbb{D}^{+}\backslash \left\{ 0\right\} .$The set
of nonnegative hyperbolic numbers is also defined as 
\begin{equation*}
\mathbb{D}^{+}=\left\{ x+y\mathbf{k}:x^{2}-y^{2}\geq 0,x\geq 0\right\} .
\end{equation*}

On the realization of $\mathbb{D}^{+},$\ M.E. Luna-Elizarraras et.al.$\cite%
{Lun}$ defined a partial order relation on $\mathbb{D}$. For two hyperbolic
numbers $\zeta _{1},\zeta _{2}$ the relation $\preceq _{_{\mathbb{D}}}$ is
defined as 
\begin{equation*}
\zeta _{1}\preceq _{_{\mathbb{D}}}\zeta _{2}\text{ if and only if }\zeta
_{2}-\zeta _{1}\in \mathbb{D}^{+}.
\end{equation*}%
One can check that this relation is reflexive, transitive and antisymmetric.
Therefore $\preceq _{_{\mathbb{D}}}$ is a partial order relation on $\mathbb{%
D}$. This partial order relation $\preceq _{_{\mathbb{D}}}$ on $\mathbb{D}$
is an extension of the total order relation $\leq $ on $\mathbb{R}.$ We say $%
\zeta _{1}\prec _{_{\mathbb{D}}}\zeta _{2}$ if $\zeta _{1}\preceq _{_{%
\mathbb{D}}}\zeta _{2}$ but $\zeta _{1}\neq \zeta _{2}.$ Also we say $\zeta
_{2}\succeq _{_{\mathbb{D}}}\zeta _{1}$ if $\zeta _{1}\preceq _{_{\mathbb{D}%
}}\zeta _{2}$ and $\zeta _{2}\succ _{_{\mathbb{D}}}\zeta _{1}$ if $\zeta
_{1}\prec _{_{\mathbb{D}}}\zeta _{2}.$

\begin{definition}
For any hyperbolic number $\zeta =\nu _{1}\mathbf{e}_{1}+\nu _{2}\mathbf{e}%
_{2},$ the $\mathbb{D}-$modulus of $\zeta $ is defined by%
\begin{equation*}
\left\vert \zeta \right\vert _{\mathbb{D}}=\left\vert \nu _{1}\mathbf{e}%
_{1}+\nu _{2}\mathbf{e}_{2}\right\vert _{\mathbb{D}}=\left\vert \nu
_{1}\right\vert \mathbf{e}_{1}+\left\vert \nu _{2}\right\vert \mathbf{e}%
_{2}\in \mathbb{D}^{+}
\end{equation*}%
where $\left\vert \nu _{1}\right\vert $ and $\left\vert \nu _{2}\right\vert $
are the usual modulus of real numbers.
\end{definition}

\begin{definition}
A subset $A$\ of $\mathbb{D}$ is said to be $\mathbb{D}-$bounded if there
exists $M\in \mathbb{D}^{+}$ such that $\left\vert \zeta \right\vert _{%
\mathbb{D}}\preceq _{_{\mathbb{D}}}M$ for any $\zeta \in A.$
\end{definition}

Set%
\begin{eqnarray*}
A_{1} &=&\left\{ x\in \mathbb{R}:\exists \text{ }y\in \mathbb{R},\text{ }x%
\mathbf{e}_{1}+y\mathbf{e}_{2}\in A\right\} , \\
A_{2} &=&\left\{ y\in \mathbb{R}:\exists \text{ }x\in \mathbb{R},\text{ }x%
\mathbf{e}_{1}+y\mathbf{e}_{2}\in A\right\} .
\end{eqnarray*}%
If $A$\ is $\mathbb{D}-$bounded then $A_{1}$ and $A_{2}$ are bounded subset
of $\mathbb{R}$.

\begin{definition}
For a $\mathbb{D}-$bounded subset $A$\ of $\mathbb{D},$ the \textbf{supremum}
of $A$\ with respect to the $\mathbb{D}-$modulus is defined by 
\begin{equation*}
\sup\nolimits_{\mathbb{D}}A=\sup A_{1}\mathbf{e}_{1}+\sup A_{2}\mathbf{e}%
_{2}.
\end{equation*}
\end{definition}

\begin{definition}
A sequence of hyperbolic numbers $\left\{ \zeta _{n}\right\} _{n\geq 1}$ is
said to be \textbf{convergent to }$\zeta \in \mathbb{D}$ if for $\varepsilon
\in \mathbb{D}^{+}\backslash \left\{ 0\right\} $ there exists $k\in \mathbb{N%
}$ such that%
\begin{equation*}
\left\vert \zeta _{n}-\zeta \right\vert _{\mathbb{D}}\prec _{_{\mathbb{D}%
}}\varepsilon .
\end{equation*}%
Then we write 
\begin{equation*}
\lim\limits_{n\rightarrow \infty }\zeta _{n}=\zeta .
\end{equation*}
\end{definition}

\begin{definition}
A sequence of hyperbolic numbers $\left\{ \zeta _{n}\right\} _{n\geq 1}$ is
said to be $\mathbb{D}-$\textbf{Cauchy sequence }$\zeta \in \mathbb{D}$ if
for $\varepsilon \in \mathbb{D}^{+}\backslash \left\{ 0\right\} $ $\exists $ 
$N\in \mathbb{N}$ such that%
\begin{equation*}
\left\vert \zeta _{N+m}-\zeta _{N}\right\vert _{\mathbb{D}}\prec _{_{\mathbb{%
D}}}\varepsilon
\end{equation*}%
for all $m=1,2,3,...$ .
\end{definition}

Note that a sequence of hyperbolic numbers $\left\{ \zeta _{n}\right\}
_{n\geq 1}$ is \textbf{convergent }if and only if it is a $\mathbb{D}-$%
\textbf{Cauchy sequence.}

\begin{definition}
A hyperbolic series $\dsum\limits_{n=1}^{\infty }\zeta _{n}$ is \textbf{%
convergent} if and only if its partial sums is a $\mathbb{D}-$Cauchy
sequence, i.e.\textbf{,} for any $\varepsilon \in \mathbb{D}^{+}\backslash
\left\{ 0\right\} $ $\exists $ $N\in \mathbb{N}$ such that 
\begin{equation*}
\left\vert \dsum\limits_{k=1}^{m}\zeta _{N+k}\right\vert _{\mathbb{D}}\prec
_{_{\mathbb{D}}}\varepsilon
\end{equation*}%
for any $m\in \mathbb{N}.$
\end{definition}

\begin{definition}
A hyperbolic series $\dsum\limits_{n=1}^{\infty }\zeta _{n}$ is $\mathbb{D}-$%
\textbf{absolutely convergent} if the series $\dsum\limits_{n=1}^{\infty
}\left\vert \zeta _{n}\right\vert _{\mathbb{D}}$ is convergent.
\end{definition}

Every $\mathbb{D}-$absolutely convergent series is convergent.

\section{Main Results}

In this section we have established our main results. We have arranged these
in three subsections. In the first subsection we have proved the bicomplex
version of Lebesgue's dominated convergence theorem. \ In the second
subsection we have focussed on the bicomplex version of
Lebesgue-Radon-Nikodym theorem and also we have established bicomplex
version of Hahn decomposition theorem. Finally in the last subsection we
have introduced the idea of hyperbolic version of invariant measure.

\subsection{Bicomplex Version of Lebesgue's Dominated Convergence Theorem}

\begin{definition}
\cite{1} Let $X$ be a measurable space then the bicomplex valued function $%
f=f_{1}\mathbf{e}_{1}+f_{2}\mathbf{e}_{2}$ is called $\mathbb{T}-$measurable
on $X$ if $f_{1}$ and $f_{2}$ are complex measurable functions on $X.$ In
particular if $f_{1}$ and $f_{2}$ are real measurable functions on $X$ then $%
f=f_{1}\mathbf{e}_{1}+f_{2}\mathbf{e}_{2}$ is called $\mathbb{D}-$measurable
function on $X.$
\end{definition}

For a bicomplex measurable function $f=f_{1}\mathbf{e}_{1}+f_{2}\mathbf{e}%
_{2}$ one can easily check that $\left\vert f\right\vert =\left\vert
f_{1}\right\vert \mathbf{e}_{1}+\left\vert f_{2}\right\vert \mathbf{e}_{2}$
is $\mathbb{D}-$measurable. Also for two $\mathbb{T}-$measurable functions $%
f $ and $g$ it is routine check up that $f+g$ and $fg$ are also $\mathbb{T}-$%
measurable functions.

\begin{theorem}
If $f$ is a $\mathbb{T}-$measurable function on a measurable space $X$ then
there is a $\mathbb{T}-$measurable function $\alpha =\alpha _{1}\mathbf{e}%
_{1}+\alpha _{2}\mathbf{e}_{2}$ such that $\left\vert \alpha _{1}\right\vert
=1,\left\vert \alpha _{2}\right\vert =1$ and $f=\alpha \left\vert
f\right\vert .$
\end{theorem}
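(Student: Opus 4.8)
The plan is to reduce the bicomplex statement to the classical complex result applied componentwise in the idempotent representation. Write $f = f_1\mathbf{e}_1 + f_2\mathbf{e}_2$ where $f_1, f_2$ are complex measurable functions on $X$. The classical theorem (Rudin, \cite{Ru}) states that for a complex measurable function $g$ there is a complex measurable $\beta$ with $|\beta| = 1$ and $g = \beta |g|$; the standard construction sets $\beta = g / |g|$ on the set where $g \neq 0$ and $\beta = 1$ on the set $E = \{x : g(x) = 0\}$. Measurability of $\beta$ follows because $\{x : g(x) \neq 0\}$ is measurable, the map $w \mapsto w/|w|$ is continuous on $\mathbb{C} \setminus \{0\}$, and one glues the two measurable pieces.

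First I would apply this to $g = f_1$ to obtain a complex measurable $\alpha_1$ with $|\alpha_1| = 1$ and $f_1 = \alpha_1 |f_1|$, and separately to $g = f_2$ to obtain complex measurable $\alpha_2$ with $|\alpha_2| = 1$ and $f_2 = \alpha_2 |f_2|$. Then I set $\alpha = \alpha_1 \mathbf{e}_1 + \alpha_2 \mathbf{e}_2$. By the Definition of $\mathbb{T}$-measurability (both idempotent components complex measurable), $\alpha$ is $\mathbb{T}$-measurable, and by construction $|\alpha_1| = |\alpha_2| = 1$.

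It remains to verify $f = \alpha |f|$. Using the fact noted in the text that $|f| = |f_1|\mathbf{e}_1 + |f_2|\mathbf{e}_2$, and that multiplication in the idempotent representation is componentwise (i.e. $(a_1\mathbf{e}_1 + a_2\mathbf{e}_2)(b_1\mathbf{e}_1 + b_2\mathbf{e}_2) = a_1 b_1 \mathbf{e}_1 + a_2 b_2 \mathbf{e}_2$, which follows from $\mathbf{e}_1^2 = \mathbf{e}_1$, $\mathbf{e}_2^2 = \mathbf{e}_2$, $\mathbf{e}_1\mathbf{e}_2 = 0$), I compute
\begin{equation*}
\alpha |f| = (\alpha_1\mathbf{e}_1 + \alpha_2\mathbf{e}_2)(|f_1|\mathbf{e}_1 + |f_2|\mathbf{e}_2) = \alpha_1|f_1|\mathbf{e}_1 + \alpha_2|f_2|\mathbf{e}_2 = f_1\mathbf{e}_1 + f_2\mathbf{e}_2 = f.
\end{equation*}
This completes the argument. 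There is no serious obstacle here: the only point requiring any care is the idempotent orthogonality identities $\mathbf{e}_i\mathbf{e}_j = \delta_{ij}\mathbf{e}_i$ that make multiplication componentwise, and the invocation of the classical complex version for each component — both of which are routine given the machinery already set up in the paper.
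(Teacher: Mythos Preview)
Your proof is correct and follows essentially the same approach as the paper: decompose $f$ in the idempotent representation, apply the classical complex polar-decomposition result (Rudin) to each component $f_i$ to obtain $\alpha_i$ with $|\alpha_i|=1$ and $f_i=\alpha_i|f_i|$, and set $\alpha=\alpha_1\mathbf{e}_1+\alpha_2\mathbf{e}_2$. You supply more detail than the paper (the explicit construction of $\alpha_i$ and the verification of $f=\alpha|f|$ via the idempotent orthogonality), but the argument is the same.
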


\begin{proof}
Let $f=f_{1}\mathbf{e}_{1}+f_{2}\mathbf{e}_{2}.$

Since $f$ is a $\mathbb{T}-$measurable function on a measurable space $X,$ $%
f_{1}$ and $f_{2}$ are complex measurable functions on $X.$

So there exist complex measurable functions $\alpha _{1},\alpha _{2}$ such
that $\left\vert \alpha _{1}\right\vert =1,\left\vert \alpha _{2}\right\vert
=1$ and $f_{1}=\alpha _{1}\left\vert f_{1}\right\vert ,f_{2}=\alpha
_{2}\left\vert f_{2}\right\vert .$

Set $\alpha =\alpha _{1}\mathbf{e}_{1}+\alpha _{2}\mathbf{e}_{2}.$ Obviously 
$\alpha $ is a $\mathbb{T}-$measurable function on $X$ and the result
follows.
\end{proof}

\begin{definition}
Let $\mathfrak{M}$ be a $\sigma -$algebra in a set $X.$ A bicomplex function 
$\mu =\mu _{1}\mathbf{e}_{1}+\mu _{2}\mathbf{e}_{2}$ defined on $X$ is
called \ a $\mathbb{T}-$measure on $\mathfrak{M}$ if $\mu _{1}\mathbf{,}\mu
_{2}$ are complex measures on $\mathfrak{M.}$ In particular if $\mu _{1}%
\mathbf{,}\mu _{2}$ are positive measures on $\mathfrak{M}$ i.e range of
both $\mu _{1}\mathbf{,}\mu _{2}$ are $\left[ 0,\infty \right] $ then $\mu $
is called a $\mathbb{D}-$measure on $\mathfrak{M}$ and if $\mu _{1}\mathbf{,}%
\mu _{2}$ are real measures on $\mathfrak{M}$ i.e range of both $\mu _{1}%
\mathbf{,}\mu _{2}$ are $[0,\infty )$ then $\mu $ is called a $\mathbb{D}%
^{+}-$measure on $\mathfrak{M.}$
\end{definition}

\begin{definition}
Let $\mu =\mu _{1}\mathbf{e}_{1}+\mu _{2}\mathbf{e}_{2}$ be a $\mathbb{D}-$%
measure on an arbitrary measurable space $X.$ We say $f=f_{1}\mathbf{e}%
_{1}+f_{2}\mathbf{e}_{2}$ to be bicomplex Lebesgue integrable function on $X$
if 
\begin{eqnarray*}
\dint\limits_{X}\left\vert f_{1}\right\vert d\mu _{1} &<&\infty , \\
\dint\limits_{X}\left\vert f_{2}\right\vert d\mu _{2} &<&\infty
\end{eqnarray*}%
i.e., $f_{i}\in L^{1}\left( \mu _{i}\right) ,$ the set of all complex
Lebesgue integrable functions with respect to $\mu _{i}$ for $i=1,2.$
\end{definition}

In that case we write%
\begin{equation*}
\dint\limits_{X}fd\mu =\left( \dint\limits_{X}f_{1}d\mu _{1}\right) \mathbf{e%
}_{1}+\left( \dint\limits_{X}f_{2}d\mu _{2}\right) \mathbf{e}_{2}.
\end{equation*}%
We define $L_{\mathbb{T}}^{1}\left( \mu \right) $ to be the collection of
all bicomplex Lebesgue integrable functions on $X.$

\begin{theorem}
Let $f,g\in L_{\mathbb{T}}^{1}\left( \mu \right) $ and $\alpha ,\beta \in 
\mathbb{C}.$ Then $\alpha f+\beta g\in L_{\mathbb{T}}^{1}\left( \mu \right)
, $ and%
\begin{equation*}
\dint\limits_{X}\left( \alpha f+\beta g\right) d\mu =\alpha
\dint\limits_{X}fd\mu +\beta \dint\limits_{X}gd\mu .
\end{equation*}
\end{theorem}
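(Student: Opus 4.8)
The plan is to reduce the whole statement to the classical complex $L^{1}$ theory by passing to the idempotent representation. Write $f=f_{1}\mathbf{e}_{1}+f_{2}\mathbf{e}_{2}$, $g=g_{1}\mathbf{e}_{1}+g_{2}\mathbf{e}_{2}$ with $f_{i},g_{i}\in L^{1}(\mu _{i})$, and $\mu =\mu _{1}\mathbf{e}_{1}+\mu _{2}\mathbf{e}_{2}$. The first thing I would record is that a complex scalar $\alpha $, viewed as a bicomplex number, has idempotent representation $\alpha =\alpha \mathbf{e}_{1}+\alpha \mathbf{e}_{2}$; combined with $\mathbf{e}_{1}^{2}=\mathbf{e}_{1}$, $\mathbf{e}_{2}^{2}=\mathbf{e}_{2}$, $\mathbf{e}_{1}\mathbf{e}_{2}=0$, this yields $\alpha f+\beta g=(\alpha f_{1}+\beta g_{1})\mathbf{e}_{1}+(\alpha f_{2}+\beta g_{2})\mathbf{e}_{2}$, so the idempotent components of the combination are exactly the corresponding combinations of components.

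Next I would verify membership in $L_{\mathbb{T}}^{1}(\mu )$. For each $i\in \{1,2\}$ the function $\alpha f_{i}+\beta g_{i}$ is complex measurable (measurability of sums and scalar multiples was already noted in the text), and by the triangle inequality $\int_{X}\left\vert \alpha f_{i}+\beta g_{i}\right\vert d\mu _{i}\leq \left\vert \alpha \right\vert \int_{X}\left\vert f_{i}\right\vert d\mu _{i}+\left\vert \beta \right\vert \int_{X}\left\vert g_{i}\right\vert d\mu _{i}<\infty $, so $\alpha f_{i}+\beta g_{i}\in L^{1}(\mu _{i})$. By the defining condition of $L_{\mathbb{T}}^{1}(\mu )$ this is precisely the assertion $\alpha f+\beta g\in L_{\mathbb{T}}^{1}(\mu )$.

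Finally, the identity follows by unwinding the definition of the bicomplex integral and applying classical linearity componentwise: $\int_{X}(\alpha f+\beta g)\,d\mu =\big(\int_{X}(\alpha f_{1}+\beta g_{1})\,d\mu _{1}\big)\mathbf{e}_{1}+\big(\int_{X}(\alpha f_{2}+\beta g_{2})\,d\mu _{2}\big)\mathbf{e}_{2}=\big(\alpha \int_{X}f_{1}\,d\mu _{1}+\beta \int_{X}g_{1}\,d\mu _{1}\big)\mathbf{e}_{1}+\big(\alpha \int_{X}f_{2}\,d\mu _{2}+\beta \int_{X}g_{2}\,d\mu _{2}\big)\mathbf{e}_{2}$, and regrouping the $\alpha $- and $\beta $-terms, using $\alpha =\alpha \mathbf{e}_{1}+\alpha \mathbf{e}_{2}$ and $\beta =\beta \mathbf{e}_{1}+\beta \mathbf{e}_{2}$ once more, recovers $\alpha \int_{X}f\,d\mu +\beta \int_{X}g\,d\mu $.

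There is essentially no deep obstacle here; the only points requiring care are bookkeeping ones: that multiplication is performed componentwise in the idempotent coordinates, that a complex constant occupies both coordinates identically, and that since $\mu $ is a $\mathbb{D}$-measure its components $\mu _{1},\mu _{2}$ are genuine positive measures, so the classical $L^{1}$ facts (the triangle inequality and linearity of the integral) apply verbatim to $f_{i}$ and $g_{i}$. Thus the "hard part", such as it is, amounts to invoking the idempotent calculus consistently.
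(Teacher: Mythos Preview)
Your proof is correct and follows essentially the same route as the paper: decompose everything in the idempotent basis and invoke classical $L^{1}(\mu_{i})$ linearity componentwise. The only cosmetic difference is that the paper writes $\alpha=\alpha_{1}\mathbf{e}_{1}+\alpha_{2}\mathbf{e}_{2}$ with general components, whereas you observe (correctly, since $\mathbf{e}_{1}+\mathbf{e}_{2}=1$) that a complex scalar has equal idempotent components $\alpha_{1}=\alpha_{2}=\alpha$; the argument is otherwise identical.
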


\begin{proof}
Let $f=f_{1}\mathbf{e}_{1}+f_{2}\mathbf{e}_{2},g=g_{1}\mathbf{e}_{1}+g_{2}%
\mathbf{e}_{2},\mu =\mu _{1}\mathbf{e}_{1}+\mu _{2}\mathbf{e}_{2},\alpha
=\alpha _{1}\mathbf{e}_{1}+\alpha _{2}\mathbf{e}_{2},\beta =\beta _{1}%
\mathbf{e}_{1}+\beta _{2}\mathbf{e}_{2}.$

Then%
\begin{equation*}
\alpha f+\beta g=\left( \alpha _{1}f_{1}+\beta _{1}g_{1}\right) \mathbf{e}%
_{1}+\left( \alpha _{2}f_{2}+\beta _{2}g_{2}\right) \mathbf{e}_{2}.
\end{equation*}%
Since $f,g\in L_{\mathbb{T}}^{1}\left( \mu \right) $ we have $\alpha
_{1}f_{1}+\beta _{1}g_{1}\in L^{1}\left( \mu _{1}\right) $ and $\alpha
_{2}f_{2}+\beta _{2}g_{2}\in L^{1}\left( \mu _{2}\right) $ and therefore $%
\alpha f+\beta g\in L_{\mathbb{T}}^{1}\left( \mu \right) .$

The last part follows from the facts%
\begin{eqnarray*}
\dint\limits_{X}\left( \alpha _{1}f_{1}+\beta _{1}g_{1}\right) d\mu _{1}
&=&\alpha _{1}\dint\limits_{X}f_{1}d\mu _{1}+\beta
_{1}\dint\limits_{X}g_{1}d\mu _{1}, \\
\dint\limits_{X}\left( \alpha _{2}f_{2}+\beta _{2}g_{2}\right) d\mu _{2}
&=&\alpha _{2}\dint\limits_{X}f_{2}d\mu _{2}+\beta
_{2}\dint\limits_{X}g_{2}d\mu _{2},
\end{eqnarray*}%
and%
\begin{equation*}
\dint\limits_{X}\left( \alpha f+\beta g\right) d\mu =\left(
\dint\limits_{X}\left( \alpha _{1}f_{1}+\beta _{1}g_{1}\right) d\mu
_{1}\right) \mathbf{e}_{1}+\left( \dint\limits_{X}\left( \alpha
_{2}f_{2}+\beta _{2}g_{2}\right) d\mu _{2}\right) \mathbf{e}_{2}.
\end{equation*}
\end{proof}

\begin{theorem}
If $f\in L_{\mathbb{T}}^{1}\left( \mu \right) ,$ then%
\begin{equation*}
\left\vert \dint\limits_{X}fd\mu \right\vert _{\mathbb{D}}\preceq _{\mathbb{D%
}}\dint\limits_{X}\left\vert f\right\vert _{\mathbb{D}}d\mu .
\end{equation*}
\end{theorem}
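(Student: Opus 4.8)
The plan is to reduce the bicomplex inequality to the classical scalar estimate $\left|\int_X g\,d\lambda\right|\le\int_X|g|\,d\lambda$ valid for a complex-valued $g$ integrable with respect to a positive measure $\lambda$ (this is the corresponding theorem in Rudin's book), applied separately in each of the two idempotent components.

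First I would write $f=f_{1}\mathbf{e}_{1}+f_{2}\mathbf{e}_{2}$ and $\mu=\mu_{1}\mathbf{e}_{1}+\mu_{2}\mathbf{e}_{2}$, where by the definition of a $\mathbb{D}$-measure $\mu_{1},\mu_{2}$ are positive measures and, since $f\in L_{\mathbb{T}}^{1}(\mu)$, we have $f_{i}\in L^{1}(\mu_{i})$ for $i=1,2$. By the definition of the bicomplex integral, $\int_X f\,d\mu=\left(\int_X f_{1}\,d\mu_{1}\right)\mathbf{e}_{1}+\left(\int_X f_{2}\,d\mu_{2}\right)\mathbf{e}_{2}$, and since $|f|_{\mathbb{D}}=|f_{1}|\mathbf{e}_{1}+|f_{2}|\mathbf{e}_{2}$ is itself $\mathbb{D}$-measurable we also have $\int_X|f|_{\mathbb{D}}\,d\mu=\left(\int_X|f_{1}|\,d\mu_{1}\right)\mathbf{e}_{1}+\left(\int_X|f_{2}|\,d\mu_{2}\right)\mathbf{e}_{2}$. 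Moreover, by the definition of the $\mathbb{D}$-modulus, $\left|\int_X f\,d\mu\right|_{\mathbb{D}}=\left|\int_X f_{1}\,d\mu_{1}\right|\mathbf{e}_{1}+\left|\int_X f_{2}\,d\mu_{2}\right|\mathbf{e}_{2}$.

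Next I would apply the classical inequality in each component separately, obtaining $\left|\int_X f_{i}\,d\mu_{i}\right|\le\int_X|f_{i}|\,d\mu_{i}$ for $i=1,2$. Hence the real numbers $\nu_{i}:=\int_X|f_{i}|\,d\mu_{i}-\left|\int_X f_{i}\,d\mu_{i}\right|$ satisfy $\nu_{1},\nu_{2}\ge 0$, so that
\begin{equation*}
\dint\limits_{X}\left\vert f\right\vert _{\mathbb{D}}d\mu-\left\vert \dint\limits_{X}fd\mu\right\vert _{\mathbb{D}}=\nu_{1}\mathbf{e}_{1}+\nu_{2}\mathbf{e}_{2}\in\mathbb{D}^{+},
\end{equation*}
which is precisely the assertion $\left|\int_X f\,d\mu\right|_{\mathbb{D}}\preceq_{\mathbb{D}}\int_X|f|_{\mathbb{D}}\,d\mu$.

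I do not expect any genuine obstacle here: the entire content is the observation that the partial order $\preceq_{\mathbb{D}}$, the $\mathbb{D}$-modulus, and the bicomplex integral are all defined componentwise in the idempotent basis $\{\mathbf{e}_{1},\mathbf{e}_{2}\}$, so that the bicomplex inequality is equivalent to the pair of classical scalar inequalities. The only point worth stating explicitly is that the hypothesis $f\in L_{\mathbb{T}}^{1}(\mu)$ delivers $f_{i}\in L^{1}(\mu_{i})$, so the classical estimate is legitimately applicable in each slot.
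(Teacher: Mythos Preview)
Your proof is correct and follows essentially the same approach as the paper: decompose $f$ and $\mu$ in the idempotent basis, apply the classical inequality $\left|\int_X f_i\,d\mu_i\right|\le\int_X|f_i|\,d\mu_i$ in each component, and reassemble to obtain the $\preceq_{\mathbb{D}}$ inequality. Your version is in fact slightly more explicit in justifying the final step via the definition of $\mathbb{D}^{+}$, but the argument is the same.
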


\begin{proof}
Let $f=f_{1}\mathbf{e}_{1}+f_{2}\mathbf{e}_{2},\mu =\mu _{1}\mathbf{e}%
_{1}+\mu _{2}\mathbf{e}_{2}.$

Since $f\in L_{\mathbb{T}}^{1}\left( \mu \right) ,$ we have $f_{1}\in
L^{1}\left( \mu _{1}\right) $ and $f_{2}\in L^{1}\left( \mu _{2}\right) .$

Therefore,%
\begin{eqnarray*}
\left\vert \dint\limits_{X}fd\mu \right\vert _{\mathbb{D}} &=&\left\vert
\left( \dint\limits_{X}f_{1}d\mu _{1}\right) \mathbf{e}_{1}+\left(
\dint\limits_{X}f_{2}d\mu _{2}\right) \mathbf{e}_{2}\right\vert _{\mathbb{D}}
\\
&=&\left\vert \dint\limits_{X}f_{1}d\mu _{1}\right\vert \mathbf{e}%
_{1}+\left\vert \dint\limits_{X}f_{2}d\mu _{2}\right\vert \mathbf{e}%
_{2}\preceq _{\mathbb{D}}\dint\limits_{X}\left\vert f_{1}\right\vert d\mu
_{1}\mathbf{e}_{1}+\dint\limits_{X}\left\vert f_{2}\right\vert d\mu _{2}%
\mathbf{e}_{2}=\dint\limits_{X}\left\vert f\right\vert _{\mathbb{D}}d\mu .
\end{eqnarray*}
\end{proof}

\begin{theorem}[Lebesgue's Dominated Convergence Theorem]
Let $\left\{ f_{n}=f_{n1}\mathbf{e}_{1}+f_{n2}\mathbf{e}_{2}\right\} $ be a
sequence of $\mathbb{T}-$measurable functions on $X$ such that%
\begin{equation*}
\lim\limits_{n\rightarrow \infty }f_{n}\left( x\right) =f\left( x\right)
\end{equation*}%
exists for all $x\in X.$ If there exists $g=g_{1}\mathbf{e}_{1}+g_{2}\mathbf{%
e}_{2}\in L_{\mathbb{T}}^{1}\left( \mu \right) $ such that%
\begin{equation*}
\left\vert f_{ni}\left( x\right) \right\vert \leq g_{i}\left( x\right)
\end{equation*}%
for all $n=1,2,3,...$ $;$ $i=1,2;$ $x\in X,$ then $f\in L_{\mathbb{T}%
}^{1}\left( \mu \right) ,$%
\begin{equation*}
\lim\limits_{n\rightarrow \infty }\dint\limits_{X}\left\vert f_{n}\left(
x\right) -f\left( x\right) \right\vert d\mu =0,
\end{equation*}%
and%
\begin{equation*}
\lim\limits_{n\rightarrow \infty }\dint\limits_{X}f_{n}\left( x\right) d\mu
=f\left( x\right) .
\end{equation*}
\end{theorem}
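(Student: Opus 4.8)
The plan is to reduce the bicomplex statement to the classical complex-valued Lebesgue dominated convergence theorem, applied separately in each of the two idempotent components, using the uniqueness of the representation $\zeta = \nu_1\mathbf{e}_1 + \nu_2\mathbf{e}_2$.

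First I would observe that, since $\mathbf{e}_1,\mathbf{e}_2$ are orthogonal idempotents with $\mathbf{e}_1 + \mathbf{e}_2 = 1$, pointwise convergence $f_n(x)\to f(x)$ of bicomplex functions is equivalent to the two ordinary pointwise limits $f_{n1}(x)\to f_1(x)$ and $f_{n2}(x)\to f_2(x)$ in $\mathbb{C}$, where $f=f_1\mathbf{e}_1+f_2\mathbf{e}_2$; in particular each $f_i$ is a complex measurable function, so $f$ is $\mathbb{T}$-measurable. Next, fixing $i\in\{1,2\}$, the hypothesis $|f_{ni}(x)|\le g_i(x)$ for all $n$ and $x$, together with $g_i\in L^1(\mu_i)$ (which is exactly what $g\in L^1_{\mathbb{T}}(\mu)$ means), is precisely the hypothesis of the classical dominated convergence theorem for the sequence $\{f_{ni}\}$. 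That theorem then gives $f_i\in L^1(\mu_i)$, $\int_X |f_{ni}-f_i|\,d\mu_i\to 0$, and $\int_X f_{ni}\,d\mu_i\to \int_X f_i\,d\mu_i$, for each $i=1,2$.

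It remains to reassemble the components. Since $f_1\in L^1(\mu_1)$ and $f_2\in L^1(\mu_2)$, the definition of bicomplex Lebesgue integrability gives $f\in L^1_{\mathbb{T}}(\mu)$. For the second conclusion, $f_n-f=(f_{n1}-f_1)\mathbf{e}_1+(f_{n2}-f_2)\mathbf{e}_2$, hence $|f_n-f|=|f_{n1}-f_1|\mathbf{e}_1+|f_{n2}-f_2|\mathbf{e}_2$ by the componentwise description of the modulus, so $\int_X |f_n-f|\,d\mu = \bigl(\int_X |f_{n1}-f_1|\,d\mu_1\bigr)\mathbf{e}_1 + \bigl(\int_X |f_{n2}-f_2|\,d\mu_2\bigr)\mathbf{e}_2$; both real components tend to $0$, and since convergence of a hyperbolic sequence is equivalent to convergence of its two real components, the hyperbolic limit is $0$. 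The third conclusion is identical: $\int_X f_n\,d\mu = \bigl(\int_X f_{n1}\,d\mu_1\bigr)\mathbf{e}_1 + \bigl(\int_X f_{n2}\,d\mu_2\bigr)\mathbf{e}_2 \to \bigl(\int_X f_1\,d\mu_1\bigr)\mathbf{e}_1 + \bigl(\int_X f_2\,d\mu_2\bigr)\mathbf{e}_2 = \int_X f\,d\mu$ (the right-hand side $f(x)$ in the statement should read $\int_X f\,d\mu$).

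I do not expect a genuine obstacle: the content is entirely in the classical theorem, and the only facts needing a line of proof are the two equivalences ``hyperbolic (pointwise / sequential) convergence $\Longleftrightarrow$ componentwise real convergence'', both of which follow immediately from $\mathbf{e}_1\mathbf{e}_2=0$, $\mathbf{e}_1+\mathbf{e}_2=1$ and the definition of convergence in $\mathbb{D}$, plus the additivity of the bicomplex integral over the two idempotent components. The one mild subtlety worth stating carefully is that the domination hypothesis is imposed componentwise on the $f_{ni}$ (rather than on $|f_n|_{\mathbb{D}}$), which is exactly the form one needs in order to invoke the scalar theorem twice.
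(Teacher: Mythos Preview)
Your proposal is correct and follows essentially the same route as the paper: both arguments decompose everything into the two idempotent components, apply the classical complex-valued dominated convergence theorem to each sequence $\{f_{ni}\}$ with dominating function $g_i\in L^1(\mu_i)$, and then reassemble using the componentwise definitions of $\left\vert\cdot\right\vert_{\mathbb{D}}$, $\int_X\cdot\,d\mu$, and hyperbolic convergence. Your observation that the right-hand side of the last displayed limit should be $\int_X f\,d\mu$ rather than $f(x)$ matches the paper's intent as well.
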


\begin{proof}
Since $\left\{ f_{n}\right\} $ is a sequence of $\mathbb{T}-$measurable
functions on $X,$ both $\left\{ f_{n1}\right\} $\textbf{\ }and $\left\{
f_{n2}\right\} $ are sequences of complex measurable functions on $X.$

Let $f=f_{1}\mathbf{e}_{1}+f_{2}\mathbf{e}_{2},\mu =\mu _{1}\mathbf{e}%
_{1}+\mu _{2}\mathbf{e}_{2}.$

Thus 
\begin{equation*}
\lim\limits_{n\rightarrow \infty }f_{ni}\left( x\right) =f_{i}\left( x\right)
\end{equation*}%
for $i=1,2.$

Now since 
\begin{equation*}
\left\vert f_{ni}\left( x\right) \right\vert \leq g_{i}\left( x\right)
\end{equation*}%
for all $n=1,2,3,...$ $;$ $i=1,2;$ $x\in X,$ we get $f_{i}\in L^{1}\left(
\mu _{i}\right) $ for $i=1,2$ and therefore $f\in L_{\mathbb{T}}^{1}\left(
\mu \right) .$

Also,%
\begin{equation*}
\lim\limits_{n\rightarrow \infty }\dint\limits_{X}\left\vert f_{ni}\left(
x\right) -f_{i}\left( x\right) \right\vert d\mu _{i}=0,
\end{equation*}%
and%
\begin{equation*}
\lim\limits_{n\rightarrow \infty }\dint\limits_{X}f_{n_{i}}\left( x\right)
d\mu _{i}=f_{i}\left( x\right)
\end{equation*}%
for $i=1,2.$ Hence%
\begin{eqnarray*}
&&\lim\limits_{n\rightarrow \infty }\dint\limits_{X}\left\vert f_{n}\left(
x\right) -f\left( x\right) \right\vert _{\mathbb{D}}d\mu \\
&=&\lim\limits_{n\rightarrow \infty }\left( \dint\limits_{X}\left\vert
f_{n1}\left( x\right) -f_{1}\left( x\right) \right\vert d\mu _{1}\right) 
\mathbf{e}_{1}+\lim\limits_{n\rightarrow \infty }\left(
\dint\limits_{X}\left\vert f_{n2}\left( x\right) -f_{2}\left( x\right)
\right\vert d\mu _{2}\right) \mathbf{e}_{2} \\
&=&0
\end{eqnarray*}%
and%
\begin{eqnarray*}
&&\lim\limits_{n\rightarrow \infty }\dint\limits_{X}f_{n}\left( x\right) d\mu
\\
&=&\lim\limits_{n\rightarrow \infty }\left( \dint\limits_{X}f_{n1}\left(
x\right) d\mu _{1}\right) \mathbf{e}_{1}+\lim\limits_{n\rightarrow \infty
}\left( \dint\limits_{X}f_{n2}\left( x\right) d\mu _{2}\right) \mathbf{e}_{2}
\\
&=&f_{1}\left( x\right) \mathbf{e}_{1}+f_{2}\left( x\right) \mathbf{e}_{2} \\
&=&f\left( x\right) .
\end{eqnarray*}
\end{proof}

\subsection{Bicomplex Version of Lebesgue-Radon-Nikodym Theorem}

Let $\mathfrak{M}$ be a measure space and $E\in \mathfrak{M}.$ Let $%
P=\left\{ E_{k}\right\} $ be a partition of $E.$ Then for all $E\in 
\mathfrak{M}$ the $\mathbb{T}-$measure $\mu =\mu _{1}\mathbf{e}_{1}+\mu _{2}%
\mathbf{e}_{2}$ on $\mathfrak{M}$ satisfies%
\begin{equation*}
\mu \left( E\right) =\dsum\limits_{k=1}^{\infty }\mu \left( E_{k}\right)
\end{equation*}%
for every partition $\left\{ E_{k}\right\} $ of $E.$

Let $\lambda =\lambda _{1}\mathbf{e}_{1}+\lambda _{2}\mathbf{e}_{2}$ be a $%
\mathbb{D}-$measure on $\mathfrak{M}.$ We say that $\lambda $ dominates $\mu 
$ on $\mathfrak{M}$\ if $\left\vert \mu _{i}\left( E\right) \right\vert \leq
\lambda _{i}\left( E\right) $ for all $E\in \mathfrak{M}$ and for $i=1,2.$
The $\mathbb{D}-$modulus of $\mu ,$ denoted by $\left\vert \mu \right\vert _{%
\mathbb{D}},$ is defined on $\mathfrak{M}$ by%
\begin{equation*}
\left\vert \mu \right\vert _{\mathbb{D}}\left( E\right)
=\sup_{P}\dsum\limits_{k=1}^{\infty }\left\vert \mu \left( E_{k}\right)
\right\vert _{\mathbb{D}}
\end{equation*}%
for all $E\in \mathfrak{M}.$

\begin{theorem}
For a $\mathbb{T}-$measure $\mu $ on $\mathfrak{M},$ $\left\vert \mu
\right\vert _{\mathbb{D}}$ is a $\mathbb{D}-$measure on $\mathfrak{M}.$
\end{theorem}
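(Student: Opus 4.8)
The plan is to reduce everything to the classical fact that the total variation $|\nu|$ of a complex measure $\nu$ is a positive measure (Rudin, \textit{Real and Complex Analysis}), applied componentwise in the idempotent representation. First I would write $\mu = \mu_1\mathbf{e}_1 + \mu_2\mathbf{e}_2$ with $\mu_1,\mu_2$ complex measures on $\mathfrak{M}$, and expand the defining formula using $|\mu(E_k)|_{\mathbb{D}} = |\mu_1(E_k)|\,\mathbf{e}_1 + |\mu_2(E_k)|\,\mathbf{e}_2$. Since the partial order $\preceq_{\mathbb{D}}$ and hence the $\sup_{\mathbb{D}}$ operation act coordinatewise on the $\mathbf{e}_1$ and $\mathbf{e}_2$ parts, the supremum over partitions $P$ splits:
\begin{equation*}
\left\vert \mu \right\vert _{\mathbb{D}}\left( E\right) = \left(\sup_{P}\dsum_{k=1}^{\infty}\left\vert \mu_1(E_k)\right\vert\right)\mathbf{e}_1 + \left(\sup_{P}\dsum_{k=1}^{\infty}\left\vert \mu_2(E_k)\right\vert\right)\mathbf{e}_2 = \left\vert \mu_1\right\vert(E)\,\mathbf{e}_1 + \left\vert \mu_2\right\vert(E)\,\mathbf{e}_2,
\end{equation*}
where $|\mu_i|$ is the usual total variation of the complex measure $\mu_i$.

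The one point needing a small argument here is that $\sup_{\mathbb{D}}$ of a set whose elements are indexed so that the $\mathbf{e}_1$-component and $\mathbf{e}_2$-component vary \emph{jointly} over the same index set $P$ still equals $(\sup \text{over } P \text{ of } \mathbf{e}_1\text{-parts})\mathbf{e}_1 + (\sup \text{over } P \text{ of } \mathbf{e}_2\text{-parts})\mathbf{e}_2$; this is exactly the content of Definition 3 (the definition of $\sup_{\mathbb{D}}$ via the sets $A_1,A_2$), since there the two suprema are also taken independently. So this step is immediate from the definitions. One should also note $|\mu|_{\mathbb{D}}(E) \in [0,\infty]\,\mathbf{e}_1 + [0,\infty]\,\mathbf{e}_2$, i.e. it takes values in the appropriate ``nonnegative hyperbolic'' range, which is clear since each $|\mu_i|(E)\geq 0$.

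Once the identity $|\mu|_{\mathbb{D}} = |\mu_1|\,\mathbf{e}_1 + |\mu_2|\,\mathbf{e}_2$ is established, it remains to verify that $|\mu|_{\mathbb{D}}$ is a $\mathbb{D}$-measure in the sense of the paper's Definition 9, i.e. that $|\mu_1|$ and $|\mu_2|$ are positive measures on $\mathfrak{M}$. But this is precisely the classical theorem that the total variation of a complex measure is a positive (countably additive, nonnegative, $[0,\infty]$-valued) measure, which I would simply cite from Rudin as indicated in the introduction. Countable additivity of $|\mu|_{\mathbb{D}}$ then follows by taking the $\mathbf{e}_1$- and $\mathbf{e}_2$-components separately and using countable additivity of each $|\mu_i|$ together with the fact that convergence in $\mathbb{D}$ is coordinatewise convergence (from Definition of convergence and the absolutely convergent series remark). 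I do not anticipate a genuine obstacle; the only thing to be careful about is making the interchange of the two suprema in the coordinate splitting explicit rather than silently assumed, since that is the sole place where the hyperbolic structure (as opposed to a pair of independent classical statements) actually enters.
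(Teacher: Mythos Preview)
Your proposal is correct and follows the same route as the paper: decompose $\mu=\mu_1\mathbf{e}_1+\mu_2\mathbf{e}_2$, identify $|\mu|_{\mathbb{D}}=|\mu_1|\mathbf{e}_1+|\mu_2|\mathbf{e}_2$, and invoke the classical fact that each $|\mu_i|$ is a positive measure. If anything you are more careful than the paper, which simply asserts the identity $|\mu|_{\mathbb{D}}=|\mu_1|\mathbf{e}_1+|\mu_2|\mathbf{e}_2$ without commenting on the coordinatewise splitting of the supremum over partitions that you justify via Definition~3.
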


\begin{proof}
Let $\mu =\mu _{1}\mathbf{e}_{1}+\mu _{2}\mathbf{e}_{2}.$

Since $\mu _{1}\mathbf{,}\mu _{2}$ being complex measures on $\mathfrak{M},$ 
$\left\vert \mu _{1}\right\vert $\textbf{\ }and $\left\vert \mu
_{2}\right\vert $ are positive measures on $\mathfrak{M}.$

Hence $\left\vert \mu \right\vert _{\mathbb{D}}=\left\vert \mu
_{1}\right\vert \mathbf{e}_{1}+\left\vert \mu _{2}\right\vert \mathbf{e}%
_{2}\,$is a $\mathbb{D}-$measure on $\mathfrak{M}.$
\end{proof}

\begin{theorem}
For a $\mathbb{T}-$measure $\mu $ on $X,$%
\begin{equation*}
\left\vert \mu \right\vert _{\mathbb{D}}\left( X\right) \prec _{\mathbb{D}%
}\infty _{\mathbb{D}}.
\end{equation*}
\end{theorem}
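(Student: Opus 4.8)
The statement asserts that the $\mathbb{D}$-modulus of a $\mathbb{T}$-measure is finite (bounded above in the partial order $\preceq_{\mathbb{D}}$ by the ``hyperbolic infinity'' $\infty_{\mathbb{D}}$, presumably $\infty\,\mathbf{e}_1 + \infty\,\mathbf{e}_2$). The plan is to reduce this entirely to the classical fact that the total variation of a complex measure is finite, exactly as in Rudin's text. Writing $\mu = \mu_1\mathbf{e}_1 + \mu_2\mathbf{e}_2$ with $\mu_1,\mu_2$ complex measures on $\mathfrak{M}$, I would first observe (as already noted in the proof of the previous theorem) that $\left\vert \mu\right\vert_{\mathbb{D}} = \left\vert \mu_1\right\vert \mathbf{e}_1 + \left\vert \mu_2\right\vert \mathbf{e}_2$, where $\left\vert \mu_i\right\vert$ is the ordinary total variation measure of the complex measure $\mu_i$. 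This identity should be justified by noting that for any partition $P = \{E_k\}$ of $E$, the idempotent decomposition gives $\sum_k \left\vert \mu(E_k)\right\vert_{\mathbb{D}} = \left(\sum_k \left\vert \mu_1(E_k)\right\vert\right)\mathbf{e}_1 + \left(\sum_k \left\vert \mu_2(E_k)\right\vert\right)\mathbf{e}_2$, and the $\sup_{\mathbb{D}}$ over partitions is computed componentwise by Definition (the supremum with respect to the $\mathbb{D}$-modulus), so $\left\vert \mu\right\vert_{\mathbb{D}}(E) = \left\vert \mu_1\right\vert(E)\,\mathbf{e}_1 + \left\vert \mu_2\right\vert(E)\,\mathbf{e}_2$.

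Next I would invoke the classical theorem (Rudin, \textit{Real and Complex Analysis}, Theorem 6.4): if $\mu_i$ is a complex measure on $\mathfrak{M}$, then $\left\vert \mu_i\right\vert(X) < \infty$. Applying this to $i=1$ and $i=2$ yields two finite nonnegative reals $\left\vert \mu_1\right\vert(X)$ and $\left\vert \mu_2\right\vert(X)$. Then
\begin{equation*}
\left\vert \mu\right\vert_{\mathbb{D}}(X) = \left\vert \mu_1\right\vert(X)\,\mathbf{e}_1 + \left\vert \mu_2\right\vert(X)\,\mathbf{e}_2,
\end{equation*}
and since both coefficients are finite real numbers, this element lies strictly below $\infty_{\mathbb{D}} = \infty\,\mathbf{e}_1 + \infty\,\mathbf{e}_2$ in the order $\prec_{\mathbb{D}}$, because the difference $\infty_{\mathbb{D}} - \left\vert \mu\right\vert_{\mathbb{D}}(X)$ has both idempotent components strictly positive, hence lies in $\mathbb{D}^{+}\setminus\{0\}$. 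This gives the desired conclusion $\left\vert \mu\right\vert_{\mathbb{D}}(X) \prec_{\mathbb{D}} \infty_{\mathbb{D}}$.

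The only genuine content being imported is the scalar total-variation-finiteness theorem; everything else is bookkeeping with the $\mathbf{e}_1,\mathbf{e}_2$ decomposition. I do not expect a serious obstacle: the main point to be careful about is the componentwise evaluation of the partition supremum, i.e. verifying that $\sup_{\mathbb{D}}$ of the set $\{\sum_k\left\vert\mu(E_k)\right\vert_{\mathbb{D}} : P \text{ a partition of } X\}$ really does split as $(\sup\text{ over }\mathbf{e}_1\text{-parts})\mathbf{e}_1 + (\sup\text{ over }\mathbf{e}_2\text{-parts})\mathbf{e}_2$; this follows directly from Definition 3 since each partition contributes a pair of nonnegative reals independently in the two slots and refining the partition in one slot does not constrain the other. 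One should also fix notation for $\infty_{\mathbb{D}}$ if it has not been introduced earlier; I would simply take it to mean the formal symbol $\infty\,\mathbf{e}_1 + \infty\,\mathbf{e}_2$ and interpret $\prec_{\mathbb{D}}$ accordingly.
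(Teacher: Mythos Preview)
Your proposal is correct and follows essentially the same approach as the paper: decompose $\mu=\mu_1\mathbf{e}_1+\mu_2\mathbf{e}_2$, invoke the classical fact that $\left\vert\mu_i\right\vert(X)<\infty$ for each complex measure $\mu_i$, and conclude componentwise. The paper's proof is more terse and simply asserts $\left\vert\mu\right\vert_{\mathbb{D}}=\left\vert\mu_1\right\vert\mathbf{e}_1+\left\vert\mu_2\right\vert\mathbf{e}_2$ without your additional justification of the componentwise supremum, but the substance is identical.
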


\begin{proof}
Let $\mu =\mu _{1}\mathbf{e}_{1}+\mu _{2}\mathbf{e}_{2}.$

Since $\mu _{1}\mathbf{,}\mu _{2}$ being complex measures on $\mathfrak{M},$%
\begin{equation*}
\left\vert \mu _{i}\right\vert \left( X\right) <\infty
\end{equation*}%
for $i=1,2.$

Hence 
\begin{equation*}
\left\vert \mu \right\vert _{\mathbb{D}}\left( X\right) =\left\vert \mu
_{1}\right\vert \left( X\right) \mathbf{e}_{1}+\left\vert \mu
_{2}\right\vert \left( X\right) \mathbf{e}_{2}\prec _{\mathbb{D}}\infty _{%
\mathbb{D}}.
\end{equation*}
\end{proof}

Let $\mu ,\lambda $ be two $\mathbb{T}-$measures on $\mathfrak{M}$ and $c\in 
\mathbb{D}.$ For all $E\in \mathfrak{M}$ define%
\begin{eqnarray*}
\left( \mu +\lambda \right) \left( E\right) &=&\mu \left( E\right) +\lambda
\left( E\right) , \\
\left( c\mu \right) \left( E\right) &=&c\mu \left( E\right) .
\end{eqnarray*}%
One can easily check that $\mu +\lambda $ and $c\mu $ are also $\mathbb{T}-$%
measures on $\mathfrak{M.}$ The collection of all $\mathbb{T}-$measures on $%
\mathfrak{M}$ forms a module space over $\mathbb{D}.$

Slight modifying the definition of hyperbolic valued signed measure from $%
\cite{Gh},$ we now define it to be a $\mathbb{T}-$measure on $\mathfrak{M}$
having range in $\mathbb{D}^{+}\cup \mathbb{D}^{-}.$ Let $\mu $ be a signed $%
\mathbb{D}-$measure on $\mathfrak{M}$. Then both $\mu ^{+}=\frac{1}{2}\left(
\left\vert \mu \right\vert _{\mathbb{D}}+\mu \right) $ and $\mu ^{-}=\frac{1%
}{2}\left( \left\vert \mu \right\vert _{\mathbb{D}}-\mu \right) $ are $%
\mathbb{D}-$measures on $\mathfrak{M}.$ Obviously $\mu ^{+}$ and $\mu ^{-}$
are $\mathbb{D}-$bounded. Also the Jordan decomposition of a signed $\mathbb{%
D}-$measure is given by%
\begin{eqnarray*}
\mu &=&\mu ^{+}-\mu ^{-}, \\
\left\vert \mu \right\vert _{\mathbb{D}} &=&\mu ^{+}+\mu ^{-}.
\end{eqnarray*}

\begin{definition}
Let $\mu =\mu _{1}\mathbf{e}_{1}+\mu _{2}\mathbf{e}_{2}$ be a $\mathbb{D}-$%
measure and $\lambda =\lambda _{1}\mathbf{e}_{1}+\lambda _{2}\mathbf{e}_{2}$
be a $\mathbb{T}-$measure on $\mathfrak{M.}$ Then $\lambda $ is said to be
absolutely $\mathbb{T}-$continuous with respect to $\mu $ if $\lambda _{i}$
is absolutely continuous with respect to $\mu _{i}$ for $i=1,2.$ We denote
this by $\lambda \ll _{\mathbb{T}}\mu .$

If for $A\in \mathfrak{M},$ $\lambda _{i}$ is concentrated on $A$ for $%
i=1,2, $ then $\lambda $ is said to be $\mathbb{T}-$concentrated on $A.$

Two $\mathbb{T}-$measures $\lambda ^{\prime }=\lambda _{1}^{\prime }\mathbf{e%
}_{1}+\lambda _{2}^{\prime }\mathbf{e}_{2},\lambda ^{\prime \prime }=\lambda
_{1}^{\prime \prime }\mathbf{e}_{1}+\lambda _{2}^{\prime \prime }\mathbf{e}%
_{2}$\ on $\mathfrak{M}$ are called mutually $\mathbb{T}-$singular if $%
\lambda _{i}^{\prime }$ and $\lambda _{i}^{\prime \prime }$\ are mutually
singular for $i=1,2$. We denote this by $\lambda ^{\prime }\perp _{\mathbb{T}%
}\lambda ^{\prime \prime }.$
\end{definition}

\begin{theorem}
Let $\lambda ,\lambda ^{\prime }$ and $\lambda ^{\prime \prime }$ be $%
\mathbb{T}-$measures on $\mathfrak{M}.$ Also let $\mu $ be a $\mathbb{D}-$%
measure on $\mathfrak{M}.$ Then the following hold:

a) If $\lambda $ is $\mathbb{T}-$concentrated on $A,$ then $\left\vert
\lambda \right\vert _{\mathbb{D}}$ is also so.

b) If $\lambda ^{\prime }\perp _{\mathbb{T}}\lambda ^{\prime \prime },$ then 
$\left\vert \lambda ^{\prime }\right\vert _{\mathbb{D}}\perp _{\mathbb{T}%
}\left\vert \lambda ^{\prime \prime }\right\vert _{\mathbb{D}}.$

c) If $\lambda ^{\prime }\perp _{\mathbb{T}}\mu ,$ $\lambda ^{\prime \prime
}\perp _{\mathbb{T}}\mu ,$ then $\lambda ^{\prime }+$ $\lambda ^{\prime
\prime }\perp _{\mathbb{T}}\mu .$

d) If $\lambda ^{\prime }\ll _{\mathbb{T}}\mu ,$ $\lambda ^{\prime \prime
}\ll _{\mathbb{T}}\mu ,$ then $\lambda ^{\prime }+$ $\lambda ^{\prime \prime
}\ll _{\mathbb{T}}\mu .$

e) If $\lambda \ll _{\mathbb{T}}\mu ,$ then $\left\vert \lambda \right\vert
_{\mathbb{D}}\ll _{\mathbb{T}}\mu .$

f) If $\lambda ^{\prime }\ll _{\mathbb{T}}\mu ,$ $\lambda ^{\prime \prime
}\perp _{\mathbb{T}}\mu ,$ then $\lambda ^{\prime }\perp _{\mathbb{T}}$ $%
\lambda ^{\prime \prime }.$

g) If $\lambda \ll _{\mathbb{T}}\mu $ and $\lambda \perp _{\mathbb{T}}\mu $
then $\lambda =0.$
\end{theorem}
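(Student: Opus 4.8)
The plan is to reduce each of the seven assertions to the corresponding classical fact about complex (or positive) measures, applied componentwise in the idempotent decomposition. Write $\lambda = \lambda_1\mathbf{e}_1 + \lambda_2\mathbf{e}_2$, $\lambda' = \lambda'_1\mathbf{e}_1 + \lambda'_2\mathbf{e}_2$, $\lambda'' = \lambda''_1\mathbf{e}_1 + \lambda''_2\mathbf{e}_2$, and $\mu = \mu_1\mathbf{e}_1 + \mu_2\mathbf{e}_2$; recall from Theorem 5 that $\left\vert\lambda\right\vert_{\mathbb{D}} = \left\vert\lambda_1\right\vert\mathbf{e}_1 + \left\vert\lambda_2\right\vert\mathbf{e}_2$, where $\left\vert\lambda_i\right\vert$ is the classical total variation. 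The definitions of $\mathbb{T}$-concentration, $\perp_{\mathbb{T}}$ and $\ll_{\mathbb{T}}$ are all, by construction, the conjunction of the two componentwise classical statements for $i=1,2$, so in every case it suffices to verify the classical statement in each component and then reassemble.

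First I would dispatch (a): if $\lambda$ is $\mathbb{T}$-concentrated on $A$, then each $\lambda_i$ is concentrated on $A$, hence $\left\vert\lambda_i\right\vert$ is concentrated on $A$ (a standard fact: $\left\vert\lambda_i\right\vert(E) = 0$ whenever $\lambda_i(E') = 0$ for all measurable $E'\subseteq E$), so $\left\vert\lambda\right\vert_{\mathbb{D}} = \left\vert\lambda_1\right\vert\mathbf{e}_1 + \left\vert\lambda_2\right\vert\mathbf{e}_2$ is $\mathbb{T}$-concentrated on $A$. Parts (b) and (e) follow the same template using the classical implications ``$\lambda'_i\perp\lambda''_i \Rightarrow \left\vert\lambda'_i\right\vert\perp\left\vert\lambda''_i\right\vert$'' and ``$\lambda_i\ll\mu_i \Rightarrow \left\vert\lambda_i\right\vert\ll\mu_i$''. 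Parts (c) and (d) are immediate from additivity: since $(\lambda'+\lambda'')_i = \lambda'_i + \lambda''_i$, the classical facts that the sum of two measures singular to $\mu_i$ is singular to $\mu_i$, and the sum of two measures absolutely continuous with respect to $\mu_i$ is absolutely continuous with respect to $\mu_i$, give the result in each component. For (f), in each component $\lambda'_i\ll\mu_i$ and $\lambda''_i\perp\mu_i$ imply $\lambda'_i\perp\lambda''_i$ by the classical argument (pick a set $A$ with $\mu_i$ concentrated on $A$ and $\lambda''_i$ concentrated on $A^c$; then $\lambda'_i$ is concentrated on $A$, so $\lambda'_i\perp\lambda''_i$), hence $\lambda'\perp_{\mathbb{T}}\lambda''$.

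Finally (g): if $\lambda\ll_{\mathbb{T}}\mu$ and $\lambda\perp_{\mathbb{T}}\mu$, then in each component $\lambda_i\ll\mu_i$ and $\lambda_i\perp\mu_i$, which forces $\lambda_i = 0$ by the classical uniqueness argument (if $\mu_i$ is concentrated on $A$ and $\lambda_i$ on $A^c$, then absolute continuity applied on $A^c$ gives $\lambda_i(A^c) = 0$, while $\lambda_i$ concentrated on $A^c$ gives $\lambda_i(A) = 0$, so $\lambda_i \equiv 0$); hence $\lambda = 0\cdot\mathbf{e}_1 + 0\cdot\mathbf{e}_2 = 0$. I do not anticipate a genuine obstacle here: the only point requiring a little care is to confirm that each classical lemma being invoked is valid for complex measures, not merely positive ones — but parts (a), (b), (e), (f), (g) all concern $\lambda$ which is only a $\mathbb{T}$-measure, so $\lambda_i$ is a complex measure, and the cited results (from Rudin, as the introduction indicates) are indeed stated at that level of generality. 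The componentwise reduction via $\mathbf{e}_1,\mathbf{e}_2$ does all the real work; there is nothing intrinsically ``bicomplex'' to overcome.
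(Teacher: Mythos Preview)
Your proposal is correct and follows essentially the same approach as the paper: decompose each measure into its idempotent components, invoke the corresponding classical result for complex (respectively positive) measures in each component $i=1,2$, and reassemble via $\mathbf{e}_1,\mathbf{e}_2$. The paper's proof is in fact somewhat terser than yours, simply citing the classical implications without the parenthetical sketches you supply for (a), (f), and (g).
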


\begin{proof}
Let $\lambda =\lambda _{1}\mathbf{e}_{1}+\lambda _{2}\mathbf{e}_{2},$ $%
\lambda ^{\prime }=\lambda _{1}^{\prime }\mathbf{e}_{1}+\lambda _{2}^{\prime
}\mathbf{e}_{2},$ $\lambda ^{\prime \prime }=\lambda _{1}^{\prime \prime }%
\mathbf{e}_{1}+\lambda _{2}^{\prime \prime }\mathbf{e}_{2}$ and $\mu =\mu
_{1}\mathbf{e}_{1}+\mu _{2}\mathbf{e}_{2}.$

a) $\lambda $ is $\mathbb{T}-$concentrated on $A$ implies $\lambda _{i}$ is
concentrated on $A$ for $i=1,2.$

Thus $\left\vert \lambda _{i}\right\vert $ is concentrated on $A$ for $%
i=1,2. $

Therefore $\left\vert \lambda \right\vert _{\mathbb{D}}=\left\vert \lambda
_{1}\right\vert \mathbf{e}_{1}+\left\vert \lambda _{2}\right\vert \mathbf{e}%
_{2}$ is $\mathbb{T}-$concentrated on $A.$

b) $\lambda ^{\prime }\perp _{\mathbb{T}}\lambda ^{\prime \prime }$ implies $%
\lambda _{i}^{\prime }$ and $\lambda _{i}^{\prime \prime }$\ are mutually
singular for $i=1,2$.

Thus $\left\vert \lambda _{i}^{\prime }\right\vert $ and $\left\vert \lambda
_{i}^{\prime \prime }\right\vert $\ are mutually singular for $i=1,2$.

Therefore $\left\vert \lambda ^{\prime }\right\vert _{\mathbb{D}}=\left\vert
\lambda _{1}^{\prime }\right\vert \mathbf{e}_{1}+\left\vert \lambda
_{2}^{\prime }\right\vert \mathbf{e}_{2}$ and $\left\vert \lambda ^{\prime
\prime }\right\vert _{\mathbb{D}}=\left\vert \lambda _{1}^{\prime \prime
}\right\vert \mathbf{e}_{1}+\left\vert \lambda _{2}^{\prime \prime
}\right\vert \mathbf{e}_{2}$ are mutually $\mathbb{T}-$singular.

c) $\lambda ^{\prime }\perp _{\mathbb{T}}\mu ,$ $\lambda ^{\prime \prime
}\perp _{\mathbb{T}}\mu $ implies $\lambda _{i}^{\prime }$ and $\mu _{i}$\
are mutually singular for $i=1,2$ and $\lambda _{i}^{\prime \prime }$ and $%
\mu _{i}$\ are mutually singular for $i=1,2.$

Thus $\lambda _{i}^{\prime }+\lambda _{i}^{\prime \prime }$ and $\mu _{i}$\
are mutually singular for $i=1,2.$

Therefore $\lambda ^{\prime }+$ $\lambda ^{\prime \prime }=\left( \lambda
_{1}^{\prime }+\lambda _{1}^{\prime \prime }\right) \mathbf{e}_{1}+\left(
\lambda _{2}^{\prime }+\lambda _{2}^{\prime \prime }\right) \mathbf{e}_{2}$
and $\mu $\ are mutually $\mathbb{T}-$singular.

d) $\lambda ^{\prime }\ll _{\mathbb{T}}\mu ,$ $\lambda ^{\prime \prime }\ll
_{\mathbb{T}}\mu $ implies $\lambda _{i}^{\prime }$ is absolutely continuous
with respect to $\mu _{i}$ for $i=1,2$ and $\lambda _{i}^{\prime \prime }$
is absolutely continuous with respect to $\mu _{i}$ for $i=1,2.$

Thus $\lambda _{i}^{\prime }+\lambda _{i}^{\prime \prime }$ is absolutely
continuous with respect to $\mu _{i}$ for $i=1,2.$

Therefore $\lambda ^{\prime }+$ $\lambda ^{\prime \prime }=\left( \lambda
_{1}^{\prime }+\lambda _{1}^{\prime \prime }\right) \mathbf{e}_{1}+\left(
\lambda _{2}^{\prime }+\lambda _{2}^{\prime \prime }\right) \mathbf{e}_{2}$
is absolutely $\mathbb{T}-$continuous with respect to $\mu .$

e) $\lambda \ll _{\mathbb{T}}\mu $ implies $\lambda _{i}$ is absolutely
continuous with respect to $\mu _{i}$ for $i=1,2.$

Thus $\left\vert \lambda _{i}\right\vert $ is absolutely continuous with
respect to $\mu _{i}$ for $i=1,2.$

Therefore $\left\vert \lambda \right\vert _{\mathbb{D}}=\left\vert \lambda
_{1}\right\vert \mathbf{e}_{1}+\left\vert \lambda _{2}\right\vert \mathbf{e}%
_{2}$ is absolutely $\mathbb{T}-$continuous with respect to $\mu .$

f) $\lambda ^{\prime }\ll _{\mathbb{T}}\mu $ implies $\lambda _{i}^{\prime }$
is absolutely continuous with respect to $\mu _{i}$ for $i=1,2$ and $\lambda
^{\prime \prime }\perp _{\mathbb{T}}\mu $ implies $\lambda _{i}^{\prime
\prime }$ and $\mu _{i}$\ are mutually singular for $i=1,2.$

Thus $\lambda _{i}^{\prime }$ and $\lambda _{i}^{\prime \prime }$\ are
mutually singular for $i=1,2.$

Therefore $\lambda ^{\prime }=\lambda _{1}^{\prime }\mathbf{e}_{1}+\lambda
_{2}^{\prime }\mathbf{e}_{2}$ and $\lambda ^{\prime \prime }=\lambda
_{1}^{\prime \prime }\mathbf{e}_{1}+\lambda _{2}^{\prime \prime }\mathbf{e}%
_{2}$ are mutually $\mathbb{T}-$singular.

g) $\lambda \ll _{\mathbb{T}}\mu $ implies $\lambda _{i}$ is absolutely
continuous with respect to $\mu _{i}$ for $i=1,2$ and $\lambda \perp _{%
\mathbb{T}}\mu $ implies $\lambda _{i}$ and $\mu _{i}$\ are mutually
singular for $i=1,2.$

Thus $\lambda _{i}=0$ for $i=1,2.$

Therefore $\lambda =\lambda _{1}\mathbf{e}_{1}+\lambda _{2}\mathbf{e}_{2}=0.$
\end{proof}

\begin{theorem}[Lebesgue-Radon-Nikodym Theorem]
Let $\mu $ be a $\sigma -$finite $\mathbb{D}-$measure on $\mathfrak{M},$ and
let $\lambda $ be $\mathbb{T}-$measure on $\mathfrak{M}.$

a) There is a unique pair of $\mathbb{T}-$measures $\lambda ^{\prime
},\lambda ^{\prime \prime }$ on $\mathfrak{M}$ such that 
\begin{equation*}
\lambda =\lambda ^{\prime }+\lambda ^{\prime \prime }
\end{equation*}%
where $\lambda ^{\prime }\ll _{\mathbb{T}}\mu ,\lambda ^{\prime \prime
}\perp _{\mathbb{T}}\mu .$ If $\lambda $ is $\mathbb{D}-$finite measure on $%
\mathfrak{M}$ then $\lambda ^{\prime },\lambda ^{\prime \prime }$ are also
so.

b) For all $E\in \mathfrak{M}$ there is a unique $h\in L_{\mathbb{T}%
}^{1}\left( \mu \right) $ such that%
\begin{equation*}
\lambda ^{\prime }\left( E\right) =\dint\limits_{E}hd\mu .
\end{equation*}
\end{theorem}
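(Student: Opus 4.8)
The plan is to reduce everything to the classical (complex/real) Lebesgue--Radon--Nikodym theorem applied in each idempotent component, exactly in the style the paper has used for all its previous results. Write $\mu=\mu_{1}\mathbf{e}_{1}+\mu_{2}\mathbf{e}_{2}$ and $\lambda=\lambda_{1}\mathbf{e}_{1}+\lambda_{2}\mathbf{e}_{2}$. Since $\mu$ is $\sigma$-finite, each $\mu_{i}$ is a $\sigma$-finite positive measure on $\mathfrak{M}$, and each $\lambda_{i}$ is a complex measure on $\mathfrak{M}$. Apply the classical Lebesgue--Radon--Nikodym theorem (as in Rudin) to the pair $(\mu_{i},\lambda_{i})$ for $i=1,2$: this yields a unique decomposition $\lambda_{i}=\lambda_{i}^{\prime}+\lambda_{i}^{\prime\prime}$ with $\lambda_{i}^{\prime}\ll\mu_{i}$ and $\lambda_{i}^{\prime\prime}\perp\mu_{i}$, together with a unique $h_{i}\in L^{1}(\mu_{i})$ such that $\lambda_{i}^{\prime}(E)=\int_{E}h_{i}\,d\mu_{i}$ for all $E\in\mathfrak{M}$.

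Next I would assemble the bicomplex objects: set $\lambda^{\prime}=\lambda_{1}^{\prime}\mathbf{e}_{1}+\lambda_{2}^{\prime}\mathbf{e}_{2}$, $\lambda^{\prime\prime}=\lambda_{1}^{\prime\prime}\mathbf{e}_{1}+\lambda_{2}^{\prime\prime}\mathbf{e}_{2}$, and $h=h_{1}\mathbf{e}_{1}+h_{2}\mathbf{e}_{2}$. Then $\lambda^{\prime}$ and $\lambda^{\prime\prime}$ are $\mathbb{T}$-measures by the paper's Definition of $\mathbb{T}$-measure, $\lambda=\lambda^{\prime}+\lambda^{\prime\prime}$ by componentwise addition, $\lambda^{\prime}\ll_{\mathbb{T}}\mu$ and $\lambda^{\prime\prime}\perp_{\mathbb{T}}\mu$ directly from the componentwise definitions of $\ll_{\mathbb{T}}$ and $\perp_{\mathbb{T}}$, and $h\in L_{\mathbb{T}}^{1}(\mu)$ because $h_{i}\in L^{1}(\mu_{i})$. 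For part (b), $\lambda^{\prime}(E)=\lambda_{1}^{\prime}(E)\mathbf{e}_{1}+\lambda_{2}^{\prime}(E)\mathbf{e}_{2}=\bigl(\int_{E}h_{1}\,d\mu_{1}\bigr)\mathbf{e}_{1}+\bigl(\int_{E}h_{2}\,d\mu_{2}\bigr)\mathbf{e}_{2}=\int_{E}h\,d\mu$ by the paper's definition of the bicomplex integral.

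For uniqueness, suppose $\lambda=\sigma^{\prime}+\sigma^{\prime\prime}$ is another such decomposition; projecting onto the two idempotent components and using the uniqueness clause of the classical theorem forces $\sigma_{i}^{\prime}=\lambda_{i}^{\prime}$ and $\sigma_{i}^{\prime\prime}=\lambda_{i}^{\prime\prime}$, hence $\sigma^{\prime}=\lambda^{\prime}$, $\sigma^{\prime\prime}=\lambda^{\prime\prime}$; similarly the uniqueness of $h$ follows from uniqueness of $h_{1},h_{2}$ in $L^{1}(\mu_{i})$. Finally, the claim that if $\lambda$ is $\mathbb{D}$-finite then so are $\lambda^{\prime},\lambda^{\prime\prime}$ follows because $\mathbb{D}$-finiteness means each $\lambda_{i}$ is finite, and the total variations satisfy $|\lambda_{i}^{\prime}|(X)\le|\lambda_{i}|(X)<\infty$ and $|\lambda_{i}^{\prime\prime}|(X)\le|\lambda_{i}|(X)<\infty$.

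The construction itself is routine once the componentwise translation is set up; the only genuine point requiring care is the \emph{uniqueness} assertion in part (a), where one must be careful that "unique pair of $\mathbb{T}$-measures" is interpreted as uniqueness of the pair $(\lambda^{\prime},\lambda^{\prime\prime})$ subject to $\lambda^{\prime}\ll_{\mathbb{T}}\mu$ and $\lambda^{\prime\prime}\perp_{\mathbb{T}}\mu$, and that this descends cleanly to the two scalar problems; here the zero-divisor structure of $\mathbb{T}$ is harmless because the idempotent projections $\zeta\mapsto\zeta\mathbf{e}_{i}$ are ring homomorphisms that split every $\mathbb{T}$-measure into its two independent complex-measure components. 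I would also remark that part (g) of the preceding theorem guarantees $\lambda^{\prime}\wedge\lambda^{\prime\prime}$ are genuinely disjoint in the appropriate sense, though it is not strictly needed for the statement as written.
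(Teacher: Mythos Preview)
Your proof is correct and follows essentially the same route as the paper: decompose $\mu$ and $\lambda$ into idempotent components, apply the classical Lebesgue--Radon--Nikodym theorem to each pair $(\mu_i,\lambda_i)$, and reassemble. You give more detail than the paper on uniqueness and on the $\mathbb{D}$-finite clause, but the underlying argument is the same componentwise reduction.
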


\begin{proof}
Let $\mu =\mu _{1}\mathbf{e}_{1}+\mu _{2}\mathbf{e}_{2},\lambda =\lambda _{1}%
\mathbf{e}_{1}+\lambda _{2}\mathbf{e}_{2}\mathfrak{.}$

Since $\mu $ is a $\sigma -$finite $\mathbb{D}-$measure on $\mathfrak{M,}$ $%
\mu _{1}$\textbf{\ }and\textbf{\ }$\mu _{2}$ are positive $\sigma -$finite
measures on $\mathfrak{M.}$ Also $\lambda _{1}\mathbf{,}\lambda _{2}$ $\ $%
are complex measures on $\mathfrak{M.}$

a) Then for each $i=1,2$ there is a unique pair of complex measures $\lambda
_{i}^{\prime },\lambda _{i}^{\prime \prime }$ on $\mathfrak{M}$ such that 
\begin{equation*}
\lambda _{i}=\lambda _{i}^{\prime }+\lambda _{i}^{\prime \prime }
\end{equation*}%
where $\lambda _{i}^{\prime }$ is absolutely continuous with respect to $\mu
_{i}$ and $\lambda _{i}^{\prime \prime },\mu _{i}$ are mutually singular. If 
$\lambda $ is positive and finite measure on $\mathfrak{M}$ then $\lambda
_{i}^{\prime },\lambda _{i}^{\prime \prime }$ are also so.

Hence the result follows from these facts.

b) For each $i=1,2$ and for all $E\in \mathfrak{M}$ there is a unique $%
h_{i}\in L^{1}\left( \mu _{i}\right) $ such that 
\begin{equation*}
\lambda _{i}^{\prime }\left( E\right) =\dint\limits_{E}h_{i}d\mu _{i}.
\end{equation*}

Therefore for all $E\in \mathfrak{M}$ there is a unique $h=h_{1}\mathbf{e}%
_{1}+h_{2}\mathbf{e}_{2}\in L_{\mathbb{T}}^{1}\left( \mu \right) $ such that%
\begin{eqnarray*}
\lambda ^{\prime }\left( E\right) &=&\lambda _{1}^{\prime }\left( E\right) 
\mathbf{e}_{1}+\lambda _{2}^{\prime }\left( E\right) \mathbf{e}_{2} \\
&=&\left( \dint\limits_{E}h_{1}d\mu _{1}\right) \mathbf{e}_{1}+\left(
\dint\limits_{E}h_{2}d\mu _{2}\right) \mathbf{e}_{2} \\
&=&\dint\limits_{E}hd\mu .
\end{eqnarray*}
\end{proof}

\begin{theorem}
Let $\lambda $ be $\mathbb{T}-$measure on $\mathfrak{M}$ and $\mu $ be $%
\mathbb{D}-$measure on $\mathfrak{M}$. Then the following are equivalent:

a) $\lambda \ll _{\mathbb{T}}\mu .$

b) For every $\varepsilon \in \mathbb{D}^{+}/\left\{ 0\right\} $ there
exists $\delta \in \mathbb{D}^{+}/\left\{ 0\right\} $ such that $\left\vert
\lambda \left( E\right) \right\vert _{\mathbb{D}}\prec _{\mathbb{D}%
}\varepsilon $ for all $E\in \mathfrak{M}$ with $\left\vert \mu
(E)\right\vert _{\mathbb{D}}\prec _{\mathbb{D}}\delta .$
\end{theorem}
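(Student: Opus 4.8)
The plan is to argue coordinatewise in the idempotent decomposition and to reduce to the classical scalar characterization of absolute continuity from \cite{Ru}: for a positive measure $m$ and a complex measure $\nu $ on $\mathfrak{M}$, one has $\nu \ll m$ if and only if for every $\eta >0$ there is $\rho >0$ with $\left\vert \nu (E)\right\vert <\eta $ whenever $m(E)<\rho $. Write $\lambda =\lambda _{1}\mathbf{e}_{1}+\lambda _{2}\mathbf{e}_{2}$ and $\mu =\mu _{1}\mathbf{e}_{1}+\mu _{2}\mathbf{e}_{2}$, so that $\lambda _{1},\lambda _{2}$ are complex measures and $\mu _{1},\mu _{2}$ positive measures on $\mathfrak{M}$; by the definition of $\ll _{\mathbb{T}}$, assertion (a) is exactly ``$\lambda _{i}\ll \mu _{i}$ for $i=1,2$''.

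First I would record the dictionary between the hyperbolic inequalities in (b) and pairs of real ones. Since $\mu _{i}(E)\geq 0$ and $\left\vert \lambda _{i}(E)\right\vert \geq 0$ we have $\left\vert \mu (E)\right\vert _{\mathbb{D}}=\mu _{1}(E)\mathbf{e}_{1}+\mu _{2}(E)\mathbf{e}_{2}$ and $\left\vert \lambda (E)\right\vert _{\mathbb{D}}=\left\vert \lambda _{1}(E)\right\vert \mathbf{e}_{1}+\left\vert \lambda _{2}(E)\right\vert \mathbf{e}_{2}$, so, writing $\delta =\delta _{1}\mathbf{e}_{1}+\delta _{2}\mathbf{e}_{2}$ and $\varepsilon =\varepsilon _{1}\mathbf{e}_{1}+\varepsilon _{2}\mathbf{e}_{2}$ in $\mathbb{D}^{+}$, the relation $\left\vert \mu (E)\right\vert _{\mathbb{D}}\prec _{\mathbb{D}}\delta $ unpacks as ``$\mu _{i}(E)\leq \delta _{i}$ for $i=1,2$, with at least one inequality strict'', and likewise for $\lambda $ against $\varepsilon $. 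In particular $\left\vert \mu (E)\right\vert _{\mathbb{D}}\prec _{\mathbb{D}}\delta $ is implied by ``$\mu _{1}(E)<\delta _{1}$ and $\mu _{2}(E)<\delta _{2}$'' (the form convenient to \emph{produce}) and implies ``$\mu _{i}(E)\leq \delta _{i}$ for $i=1,2$'' (the form convenient to \emph{use}). With this in place the implication (a)$\Rightarrow $(b) is routine: given $\varepsilon \in \mathbb{D}^{+}\backslash \left\{ 0\right\} $, apply the scalar criterion to each $\lambda _{i}\ll \mu _{i}$ with tolerance $\varepsilon _{i}/2$ when $\varepsilon _{i}>0$ (and, when $\varepsilon _{i}=0$, use instead that $\lambda _{i}$ vanishes on $\mu _{i}$-null sets, setting $\rho _{i}=0$), getting $\rho _{i}$; take $\delta $ with coordinates $\rho _{i}/2$, which lies in $\mathbb{D}^{+}\backslash \left\{ 0\right\} $ because $\varepsilon \neq 0$ forces some $\rho _{i}>0$; then $\left\vert \mu (E)\right\vert _{\mathbb{D}}\prec _{\mathbb{D}}\delta $ gives $\mu _{i}(E)\leq \rho _{i}/2$, hence $\left\vert \lambda _{i}(E)\right\vert <\varepsilon _{i}$, hence $\varepsilon -\left\vert \lambda (E)\right\vert _{\mathbb{D}}\in \mathbb{D}^{+}\backslash \left\{ 0\right\} $.

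The delicate direction, and the main obstacle, is (b)$\Rightarrow $(a): a single hyperbolic $\delta $ must control both coordinates $\mu _{1}(E),\mu _{2}(E)$ \emph{simultaneously on the same} $E$, whereas the scalar criterion producing $\lambda _{i}\ll \mu _{i}$ concerns only $\mu _{i}(E)$. The plan is, for each fixed $i$ and $\eta >0$, to feed (b) the test value $\varepsilon =\eta \mathbf{e}_{1}+\eta \mathbf{e}_{2}$, obtain $\delta =\delta _{1}\mathbf{e}_{1}+\delta _{2}\mathbf{e}_{2}\in \mathbb{D}^{+}\backslash \left\{ 0\right\} $, and then extract from the coordinates of $\delta $ a scalar $\rho >0$ so that $\mu _{i}(E)<\rho $ forces $\left\vert \mu (E)\right\vert _{\mathbb{D}}\prec _{\mathbb{D}}\delta $, whence $\left\vert \lambda _{i}(E)\right\vert <\eta $; the care needed is precisely in handling a vanishing coordinate of $\delta $ and in making sure the companion coordinate of $\mu $ does not prevent the hypothesis of (b) from holding. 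Once the scalar absolute-continuity criterion is verified for both $i$, the classical result of \cite{Ru} yields $\lambda _{i}\ll \mu _{i}$, i.e. $\lambda \ll _{\mathbb{T}}\mu $, closing the equivalence.
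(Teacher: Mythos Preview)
Your coordinatewise reduction to the classical $\varepsilon$--$\delta$ criterion from \cite{Ru} is exactly the route the paper takes; for (a)$\Rightarrow$(b) your argument is in fact more careful than the paper's, which simply combines the two scalar statements into the hyperbolic one without discussing a vanishing coordinate of $\varepsilon$.

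The gap is in (b)$\Rightarrow$(a), and it is precisely the obstacle you name but do not resolve. Your plan is to find $\rho>0$ so that $\mu_i(E)<\rho$ alone forces $\left\vert\mu(E)\right\vert_{\mathbb D}\prec_{\mathbb D}\delta$; but $\left\vert\mu(E)\right\vert_{\mathbb D}\prec_{\mathbb D}\delta$ also requires $\mu_j(E)\le\delta_j$ for the companion index $j$, and nothing in the hypotheses links $\mu_j(E)$ to $\mu_i(E)$. This is not a mere technicality: take $X=[0,1]$ with the Borel $\sigma$-algebra, $\mu_1$ Lebesgue measure, $\mu_2$ the Dirac mass at $0$, $\lambda_1=\mu_2$ and $\lambda_2=0$. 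Then (a) fails because $\mu_1(\{0\})=0$ while $\lambda_1(\{0\})=1$, so $\lambda_1\not\ll\mu_1$. Yet (b) holds: given any $\varepsilon\in\mathbb D^{+}\setminus\{0\}$, set $\delta=\mathbf e_1+\tfrac12\,\mathbf e_2$; if $\left\vert\mu(E)\right\vert_{\mathbb D}\prec_{\mathbb D}\delta$ then $\mu_2(E)\le\tfrac12$, hence $0\notin E$, hence $\lambda_1(E)=\lambda_2(E)=0$ and $\left\vert\lambda(E)\right\vert_{\mathbb D}=0\prec_{\mathbb D}\varepsilon$. Thus no choice of $\rho$ in your scheme can work, and the implication (b)$\Rightarrow$(a) is not obtainable from the stated hypotheses by this method. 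The paper's own proof does not address this direction either: its final sentence only shows that the two scalar $\varepsilon$--$\delta$ conditions yield (b), i.e.\ the (a)$\Rightarrow$(b) half, so the difficulty you flagged is shared by the original argument rather than something you overlooked relative to it.
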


\begin{proof}
Let $\lambda =\lambda _{1}\mathbf{e}_{1}+\lambda _{2}\mathbf{e}_{2},\mu =\mu
_{1}\mathbf{e}_{1}+\mu _{2}\mathbf{e}_{2}$ where $\lambda _{1}\mathbf{,}%
\lambda _{2}$ are complex measures and $\mu _{1}\mathbf{,}\mu _{2}$ are
positive measures on $\mathfrak{M}$.

Then for $i=1,2$ the following two statements are equivalent

i) $\lambda _{i}$ is absolutely continuous with respect to $\mu _{i}$.

ii) For every $\varepsilon _{i}>0$ there exists $\delta _{i}>0$ such that $%
\left\vert \lambda _{i}\left( E\right) \right\vert <\varepsilon _{i}$ for
all $E\in \mathfrak{M}$ with $\mu _{i}\left( E\right) <\delta _{i}.$

If for $i=1,2,$ $\lambda _{i}$ is absolutely continuous with respect to $\mu
_{i}$ we get $\lambda \ll _{\mathbb{T}}\mu .$

Also if for every $\varepsilon _{i}>0$ there exists $\delta _{i}>0$ such
that $\left\vert \lambda _{i}\left( E\right) \right\vert <\varepsilon _{i}$
for all $E\in \mathfrak{M}$ with $\mu _{i}\left( E\right) <\delta _{i}$ we
can say for every $\varepsilon =\varepsilon _{1}\mathbf{e}_{1}+\varepsilon
_{2}\mathbf{e}_{2}\in \mathbb{D}^{+}/\left\{ 0\right\} $ there exists $%
\delta =\delta _{1}\mathbf{e}_{1}+\delta _{2}\mathbf{e}_{2}\in \mathbb{D}%
^{+}/\left\{ 0\right\} $ such that $\left\vert \lambda \left( E\right)
\right\vert _{\mathbb{D}}=\left\vert \lambda _{1}\left( E\right) \right\vert 
\mathbf{e}_{1}+\left\vert \lambda _{2}\left( E\right) \right\vert \mathbf{e}%
_{2}\prec _{\mathbb{D}}\varepsilon $ for all $E\in \mathfrak{M}$ with $%
\left\vert \mu (E)\right\vert _{\mathbb{D}}=\left\vert \mu _{1}\left(
E\right) \right\vert \mathbf{e}_{1}+\left\vert \mu _{2}\left( E\right)
\right\vert \mathbf{e}_{2}\prec _{\mathbb{D}}\delta .$
\end{proof}

\begin{theorem}
\label{T10}Let $\mathfrak{M}$ be a $\sigma -$algebra on $X.$ Let $\mu $ be $%
\mathbb{T}-$measure on $\mathfrak{M.}$ Then there exists a $\mathbb{T}-$%
measurable function $h$ such that $\left\vert h\left( x\right) \right\vert _{%
\mathbb{D}}=\mathbf{e}_{1}+\mathbf{e}_{2}$ for all $x\in X$ and such that%
\begin{equation*}
d\mu =hd\left\vert \mu \right\vert _{\mathbb{D}}.
\end{equation*}
\end{theorem}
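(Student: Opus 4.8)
The plan is to reduce the statement to the classical polar decomposition of a complex measure (Rudin, \emph{Real and Complex Analysis}, Theorem 6.12), applied componentwise in the idempotent representation. Write $\mu=\mu_{1}\mathbf{e}_{1}+\mu_{2}\mathbf{e}_{2}$, so that $\mu_{1},\mu_{2}$ are complex measures on $\mathfrak{M}$. By the classical Radon--Nikodym / polar decomposition theorem, for each $i=1,2$ there is a complex measurable function $h_{i}$ on $X$ with $\left\vert h_{i}(x)\right\vert=1$ for all $x\in X$ and $d\mu_{i}=h_{i}\,d\left\vert\mu_{i}\right\vert$, where $\left\vert\mu_{i}\right\vert$ is the total variation of $\mu_{i}$. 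Then set $h=h_{1}\mathbf{e}_{1}+h_{2}\mathbf{e}_{2}$; this is $\mathbb{T}$-measurable by definition, and since $\left\vert h(x)\right\vert_{\mathbb{D}}=\left\vert h_{1}(x)\right\vert\mathbf{e}_{1}+\left\vert h_{2}(x)\right\vert\mathbf{e}_{2}=\mathbf{e}_{1}+\mathbf{e}_{2}$, the normalization condition holds for every $x\in X$.

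It then remains to verify $d\mu=h\,d\left\vert\mu\right\vert_{\mathbb{D}}$ in the sense that $\mu(E)=\int_{E}h\,d\left\vert\mu\right\vert_{\mathbb{D}}$ for all $E\in\mathfrak{M}$. For this I first record that $\left\vert\mu\right\vert_{\mathbb{D}}=\left\vert\mu_{1}\right\vert\mathbf{e}_{1}+\left\vert\mu_{2}\right\vert\mathbf{e}_{2}$ — this identity is exactly the content of the proof of the earlier theorem that $\left\vert\mu\right\vert_{\mathbb{D}}$ is a $\mathbb{D}$-measure, so I may invoke it. Consequently, unwinding the definition of the bicomplex integral against a $\mathbb{D}$-measure,
\begin{equation*}
\int_{E}h\,d\left\vert\mu\right\vert_{\mathbb{D}}=\left(\int_{E}h_{1}\,d\left\vert\mu_{1}\right\vert\right)\mathbf{e}_{1}+\left(\int_{E}h_{2}\,d\left\vert\mu_{2}\right\vert\right)\mathbf{e}_{2}=\mu_{1}(E)\mathbf{e}_{1}+\mu_{2}(E)\mathbf{e}_{2}=\mu(E),
\end{equation*}
using the classical polar decomposition componentwise in the middle step. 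One should also note in passing that $h\in L^{1}_{\mathbb{T}}(\left\vert\mu\right\vert_{\mathbb{D}})$, since $\int_{X}\left\vert h_{i}\right\vert\,d\left\vert\mu_{i}\right\vert=\left\vert\mu_{i}\right\vert(X)<\infty$ by the finiteness of the total variation of a complex measure (the preceding theorem), so the integrals above are legitimately defined.

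There is no serious obstacle here: the bicomplex structure contributes nothing beyond splitting into the two idempotent components, and every analytic fact needed (existence of the unit-modulus Radon--Nikodym derivative, finiteness of total variation, the identity $\left\vert\mu\right\vert_{\mathbb{D}}=\left\vert\mu_{1}\right\vert\mathbf{e}_{1}+\left\vert\mu_{2}\right\vert\mathbf{e}_{2}$) is either classical or already established in the paper. The only point requiring a little care is purely bookkeeping: making sure the definition of $d\mu=h\,d\left\vert\mu\right\vert_{\mathbb{D}}$ is interpreted consistently with the paper's definition of $\int_{X}f\,d\nu$ for a $\mathbb{D}$-measure $\nu$, which forces $\left\vert\mu\right\vert_{\mathbb{D}}$ to be treated as the $\mathbb{D}$-measure $\left\vert\mu_{1}\right\vert\mathbf{e}_{1}+\left\vert\mu_{2}\right\vert\mathbf{e}_{2}$ rather than as an abstract sup over partitions; once that identification is made, the proof is a two-line componentwise computation.
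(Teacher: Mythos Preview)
Your proposal is correct and follows essentially the same approach as the paper: decompose $\mu$ idempotently as $\mu_{1}\mathbf{e}_{1}+\mu_{2}\mathbf{e}_{2}$, apply the classical polar decomposition (Rudin, Theorem~6.12) to each complex measure $\mu_{i}$ to obtain $h_{i}$ with $|h_{i}|=1$ and $d\mu_{i}=h_{i}\,d|\mu_{i}|$, and then set $h=h_{1}\mathbf{e}_{1}+h_{2}\mathbf{e}_{2}$. Your write-up is in fact more careful than the paper's, explicitly noting the identification $|\mu|_{\mathbb{D}}=|\mu_{1}|\mathbf{e}_{1}+|\mu_{2}|\mathbf{e}_{2}$ and the integrability of $h$, but the underlying argument is identical.
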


\begin{proof}
Let $\mu =\mu _{1}\mathbf{e}_{1}+\mu _{2}\mathbf{e}_{2}.$

Since for $i=1,2,$ $\mu _{i}$ is a complex measure on the $\sigma -$algebra $%
\mathfrak{M}$ in $X,$ there exists measurable functions $h_{i}$ such that $%
\left\vert h_{i}\left( x\right) \right\vert =1$ for all $x\in X$ and such
that 
\begin{equation*}
d\mu _{i}=h_{i}d\left\vert \mu _{i}\right\vert .
\end{equation*}%
Setting $h=h_{1}\mathbf{e}_{1}+h_{2}\mathbf{e}_{2},$we get%
\begin{eqnarray*}
d\mu &=&d\mu _{1}\mathbf{e}_{1}+d\mu _{2}\mathbf{e}_{2} \\
&=&h_{1}d\left\vert \mu _{1}\right\vert \mathbf{e}_{1}+h_{2}d\left\vert \mu
_{2}\right\vert \mathbf{e}_{2} \\
&=&hd\left\vert \mu \right\vert _{\mathbb{D}}.
\end{eqnarray*}
\end{proof}

\begin{theorem}
Let $\mu $ be a $\mathbb{D}-$measure on $\mathfrak{M}$, $g\in L_{\mathbb{T}%
}^{1}\left( \mu \right) ,$ and for all $E\in \mathfrak{M}$ 
\begin{equation*}
\lambda \left( E\right) =\dint\limits_{E}gd\mu .
\end{equation*}%
Then for all $E\in \mathfrak{M,}$ 
\begin{equation*}
\left\vert \lambda \right\vert _{\mathbb{D}}\left( E\right)
=\dint\limits_{E}\left\vert g\right\vert _{\mathbb{D}}d\mu .
\end{equation*}

\begin{proof}
Let $\mu =\mu _{1}\mathbf{e}_{1}+\mu _{2}\mathbf{e}_{2},$ $\lambda =\lambda
_{1}\mathbf{e}_{1}+\lambda _{2}\mathbf{e}_{2}$ and $g=g_{1}\mathbf{e}%
_{1}+g_{2}\mathbf{e}_{2}.$ Then $\mu _{1},\mu _{2}$ are positive measures on 
$\mathfrak{M.}$

Since $g\in L_{\mathbb{T}}^{1}\left( \mu \right) ,$ then $g_{1}\in
L^{1}\left( \mu _{1}\right) $ and $g_{2}\in L^{1}\left( \mu _{2}\right) .$

Also $\lambda \left( E\right) =\dint\limits_{E}gd\mu \Rightarrow \lambda
_{1}\left( E\right) =\dint\limits_{E}g_{1}d\mu _{1}$ and $\lambda _{2}\left(
E\right) =\dint\limits_{E}g_{2}d\mu _{2}.$

Then by Consequences of the Radon-Nikodym Theorem \cite{Ru}, we have%
\begin{equation*}
\left\vert \lambda _{1}\right\vert \left( E\right)
=\dint\limits_{E}\left\vert g_{1}\right\vert d\mu _{1}\text{ and }\left\vert
\lambda _{2}\right\vert \left( E\right) =\dint\limits_{E}\left\vert
g_{2}\right\vert d\mu _{2}.\text{ }
\end{equation*}

Therefore 
\begin{eqnarray*}
\left\vert \lambda \right\vert _{\mathbb{D}}\left( E\right) &=&\left\vert
\lambda _{1}\right\vert \left( E\right) \mathbf{e}_{1}+\left\vert \lambda
_{1}\right\vert \left( E\right) \mathbf{e}_{2} \\
&=&\left( \dint\limits_{E}\left\vert g_{1}\right\vert d\mu _{1}\right) 
\mathbf{e}_{1}+\left( \dint\limits_{E}\left\vert g_{2}\right\vert d\mu
_{2}\right) \mathbf{e}_{2} \\
&=&\dint\limits_{E}(\left\vert g_{1}\right\vert \mathbf{e}_{1}+\left\vert
g_{1}\right\vert \mathbf{e}_{2})d(\mu _{1}\mathbf{e}_{1}+\mu _{2}\mathbf{e}%
_{2}) \\
&=&\dint\limits_{E}\left\vert g\right\vert _{\mathbb{D}}d\mu .
\end{eqnarray*}
\end{proof}
\end{theorem}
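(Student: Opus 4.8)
The plan is to reduce the statement to the corresponding classical fact about complex measures by passing to the idempotent components, exactly as in the preceding results. Write $\mu=\mu_{1}\mathbf{e}_{1}+\mu_{2}\mathbf{e}_{2}$, $\lambda=\lambda_{1}\mathbf{e}_{1}+\lambda_{2}\mathbf{e}_{2}$ and $g=g_{1}\mathbf{e}_{1}+g_{2}\mathbf{e}_{2}$. Since $\mu$ is a $\mathbb{D}$-measure, $\mu_{1},\mu_{2}$ are positive measures on $\mathfrak{M}$, and since $g\in L_{\mathbb{T}}^{1}(\mu)$ we have $g_{i}\in L^{1}(\mu_{i})$ for $i=1,2$.

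First I would unwind the hypothesis $\lambda(E)=\int_{E}g\,d\mu$ componentwise. Using the relations $\mathbf{e}_{1}^{2}=\mathbf{e}_{1}$, $\mathbf{e}_{2}^{2}=\mathbf{e}_{2}$, $\mathbf{e}_{1}\mathbf{e}_{2}=0$ together with the definition of the bicomplex integral, one gets $g\,d\mu=g_{1}\,d\mu_{1}\,\mathbf{e}_{1}+g_{2}\,d\mu_{2}\,\mathbf{e}_{2}$, so that $\lambda_{i}(E)=\int_{E}g_{i}\,d\mu_{i}$ for $i=1,2$. In particular each $\lambda_{i}$ is a finite complex measure on $\mathfrak{M}$ (standard, since $g_{i}\in L^{1}(\mu_{i})$ and $\mu_{i}$ is positive), so $\lambda$ is a genuine $\mathbb{T}$-measure and $\left\vert \lambda\right\vert_{\mathbb{D}}$ is well defined.

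Next I would apply the classical consequence of the Radon--Nikodym theorem \cite{Ru}: if $d\lambda_{i}=g_{i}\,d\mu_{i}$ with $g_{i}\in L^{1}(\mu_{i})$, then $d\left\vert \lambda_{i}\right\vert=\left\vert g_{i}\right\vert\,d\mu_{i}$, i.e. $\left\vert \lambda_{i}\right\vert(E)=\int_{E}\left\vert g_{i}\right\vert\,d\mu_{i}$ for all $E\in\mathfrak{M}$ and $i=1,2$. Finally I would reassemble, using the identity $\left\vert \lambda\right\vert_{\mathbb{D}}=\left\vert \lambda_{1}\right\vert\mathbf{e}_{1}+\left\vert \lambda_{2}\right\vert\mathbf{e}_{2}$ (which is what the earlier theorem on the $\mathbb{D}$-modulus of a $\mathbb{T}$-measure actually establishes), the relation $\left\vert g\right\vert_{\mathbb{D}}=\left\vert g_{1}\right\vert\mathbf{e}_{1}+\left\vert g_{2}\right\vert\mathbf{e}_{2}$, and the componentwise description of the bicomplex integral:
\[
\left\vert \lambda\right\vert_{\mathbb{D}}(E)=\left\vert \lambda_{1}\right\vert(E)\mathbf{e}_{1}+\left\vert \lambda_{2}\right\vert(E)\mathbf{e}_{2}=\Bigl(\int_{E}\left\vert g_{1}\right\vert\,d\mu_{1}\Bigr)\mathbf{e}_{1}+\Bigl(\int_{E}\left\vert g_{2}\right\vert\,d\mu_{2}\Bigr)\mathbf{e}_{2}=\int_{E}\left\vert g\right\vert_{\mathbb{D}}\,d\mu .
\]

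The argument carries no serious difficulty; the only point deserving a moment's care --- the closest thing to an obstacle --- is the bookkeeping in the first step, namely checking that the zero divisors $\mathbf{e}_{1},\mathbf{e}_{2}$ genuinely decouple $\int_{E}g\,d\mu$ into its two scalar integrals, and that the supremum over partitions defining $\left\vert \lambda\right\vert_{\mathbb{D}}$ respects this splitting (so that $\left\vert \lambda\right\vert_{\mathbb{D}}=\left\vert \lambda_{1}\right\vert\mathbf{e}_{1}+\left\vert \lambda_{2}\right\vert\mathbf{e}_{2}$). Both are immediate from the definitions already recorded in the paper --- the definition of the bicomplex integral and of $\sup_{\mathbb{D}}$ of a $\mathbb{D}$-bounded set --- so nothing beyond routine verification is needed.
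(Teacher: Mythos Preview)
Your proof is correct and follows essentially the same approach as the paper: decompose into idempotent components, apply the classical consequence of the Radon--Nikodym theorem to each $\lambda_i$ separately, and reassemble via $|\lambda|_{\mathbb{D}}=|\lambda_1|\mathbf{e}_1+|\lambda_2|\mathbf{e}_2$. Your version is slightly more careful about justifying the componentwise splitting, but the argument is the same.
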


\begin{theorem}[Hahn Decomposition Theorem]
Let $\mathfrak{M}$ be a $\sigma -$algebra on $X.$ Let $\mu $ be a $\mathbb{D}%
^{+}-$measure on $\mathfrak{M}$. Then there exists a partition $%
\{A,B,C,D\}\subset \mathfrak{M}$ of $X\ $such that for all $E\in \mathfrak{M,%
}$%
\begin{equation*}
\mu ^{+}\left( E\right) =\mu \left( E\cap A\right) +\mathbf{e}_{1}\left\vert
\mu \left( E\cap C\right) \right\vert _{\mathbb{D}}+\mathbf{e}_{2}\left\vert
\mu \left( E\cap D\right) \right\vert _{\mathbb{D}},
\end{equation*}%
\begin{equation}
\mu ^{-}\left( E\right) =-\mu \left( E\cap B\right) -\mu \left( E\cap
C\right) -\mu \left( E\cap D\right) +\mathbf{e}_{1}\left\vert \mu \left(
E\cap C\right) \right\vert _{\mathbb{D}}+\mathbf{e}_{2}\left\vert \mu \left(
E\cap D\right) \right\vert _{\mathbb{D}}.  \label{a}
\end{equation}

\begin{proof}
By Theorem $\ref{T10},$ $d\mu =hd\left\vert \mu \right\vert _{\mathbb{D}},$
where $\left\vert h\left( x\right) \right\vert _{\mathbb{D}}=\mathbf{e}_{1}+%
\mathbf{e}_{2}$ for all $x\in X.$ Since $\mu $ is hyperbolic, it follows
that $h$ is hyperbolic, hence $h(x)=\pm \mathbf{e}_{1}\pm \mathbf{e}_{2}$
for all $x\in X.$ Put 
\begin{equation*}
A=\{x:h(x)=\mathbf{e}_{1}+\mathbf{e}_{2}\},
\end{equation*}%
\begin{equation*}
B=\{x:h(x)=-\mathbf{e}_{1}-\mathbf{e}_{2}\},
\end{equation*}%
\begin{equation*}
C=\{x:h(x)=\mathbf{e}_{1}-\mathbf{e}_{2}\},
\end{equation*}%
\begin{equation*}
D=\{x:h(x)=-\mathbf{e}_{1}+\mathbf{e}_{2}\}.
\end{equation*}

Since $\mu ^{+}=\frac{1}{2}\left( \left\vert \mu \right\vert _{\mathbb{D}%
}+\mu \right) ,$ and since%
\begin{equation*}
\frac{1}{2}(1+h)=\left\{ 
\begin{array}{c}
h\text{ \ on }A, \\ 
0\text{ \ on }B, \\ 
\mathbf{e}_{1}\text{ \ on }C, \\ 
\mathbf{e}_{2}\text{ on }D,%
\end{array}%
\right.
\end{equation*}

we have, for any $E\in \mathfrak{M,}$%
\begin{eqnarray*}
\mu ^{+}\left( E\right) &=&\frac{1}{2}\int\limits_{E}(1+h)d\left\vert \mu
\right\vert _{\mathbb{D}} \\
&=&\int\limits_{E\cap A}hd\left\vert \mu \right\vert _{\mathbb{D}%
}+\int\limits_{E\cap C}\mathbf{e}_{1}d\left\vert \mu \right\vert _{\mathbb{D}%
}+\int\limits_{E\cap D}\mathbf{e}_{2}d\left\vert \mu \right\vert _{\mathbb{D}%
} \\
&=&\mu (E\cap A)+\mathbf{e}_{1}\left\vert \mu (E\cap C)\right\vert _{\mathbb{%
D}}+\mathbf{e}_{2}\left\vert \mu (E\cap D)\right\vert _{\mathbb{D}}.
\end{eqnarray*}

Since $\mu (E)=$ $\mu (E\cap A)+$ $\mu (E\cap B)+$ $\mu (E\cap C)+$ $\mu
(E\cap D)$ and since $\mu =\mu ^{+}-\mu ^{-},$ (\ref{a}) follows.
\end{proof}
\end{theorem}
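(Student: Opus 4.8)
The plan is to derive the four-set partition directly from the polar-type representation of $\mu$ furnished by Theorem \ref{T10}, in the same spirit in which the classical Hahn decomposition is read off from $d\mu = h\,d|\mu|$, but now exploiting the idempotent arithmetic of $\mathbb{D}$. First I would apply Theorem \ref{T10} to obtain a $\mathbb{T}$-measurable $h$ with $|h(x)|_{\mathbb{D}} = \mathbf{e}_{1}+\mathbf{e}_{2}$ for all $x\in X$ and $d\mu = h\,d|\mu|_{\mathbb{D}}$. Writing $h = h_{1}\mathbf{e}_{1}+h_{2}\mathbf{e}_{2}$, the fact that $\mu$ is hyperbolic forces $h_{1},h_{2}$ to be real-valued, and the normalisation $|h|_{\mathbb{D}} = \mathbf{e}_{1}+\mathbf{e}_{2}$ forces $|h_{1}|\equiv|h_{2}|\equiv 1$; hence $h_{i}(x)\in\{1,-1\}$, and $h(x)$ takes only the four values $\pm\mathbf{e}_{1}\pm\mathbf{e}_{2}$ at every point. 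This is exactly what singles out the partition $\{A,B,C,D\}$ as the corresponding level sets of $h$; each set lies in $\mathfrak{M}$ since $h_{1},h_{2}$ are measurable, and they are pairwise disjoint with union $X$.

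Next I would compute $\tfrac12(1+h)$ on each piece, using $1 = \mathbf{e}_{1}+\mathbf{e}_{2}$ and the rules $\mathbf{e}_{i}^{2}=\mathbf{e}_{i}$, $\mathbf{e}_{1}\mathbf{e}_{2}=0$: it equals $h$ on $A$, $0$ on $B$, $\mathbf{e}_{1}$ on $C$, and $\mathbf{e}_{2}$ on $D$. Since $\mu^{+}=\tfrac12(|\mu|_{\mathbb{D}}+\mu)$ and $d\mu = h\,d|\mu|_{\mathbb{D}}$, for $E\in\mathfrak{M}$ I would split the domain of integration along the disjoint pieces and obtain
\[
\mu^{+}(E) = \int_{E}\tfrac12(1+h)\,d|\mu|_{\mathbb{D}} = \int_{E\cap A} h\,d|\mu|_{\mathbb{D}} + \mathbf{e}_{1}\,|\mu|_{\mathbb{D}}(E\cap C) + \mathbf{e}_{2}\,|\mu|_{\mathbb{D}}(E\cap D).
\]
The first term is $\mu(E\cap A)$ by the defining relation for $h$. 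To turn $\mathbf{e}_{1}\,|\mu|_{\mathbb{D}}(E\cap C)$ into $\mathbf{e}_{1}\,|\mu(E\cap C)|_{\mathbb{D}}$, I would observe that on $C$ the density is $\mathbf{e}_{1}-\mathbf{e}_{2}$, so componentwise $\mu(E\cap C) = |\mu_{1}|(E\cap C)\mathbf{e}_{1} - |\mu_{2}|(E\cap C)\mathbf{e}_{2}$, whence $|\mu(E\cap C)|_{\mathbb{D}} = |\mu|_{\mathbb{D}}(E\cap C)$; likewise on $D$. Collecting terms yields the stated formula for $\mu^{+}(E)$.

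Finally, for $\mu^{-}$ I would use $\mu^{-}=\mu^{+}-\mu$ together with finite additivity $\mu(E) = \mu(E\cap A)+\mu(E\cap B)+\mu(E\cap C)+\mu(E\cap D)$; the subtraction cancels $\mu(E\cap A)$ and produces exactly (\ref{a}). The only genuinely non-routine step is the one pinning $h$ down to four values: it hinges on transferring ``$\mu$ hyperbolic'' to ``$h$ hyperbolic'' through the idempotent components, where the scalar polar decomposition of Theorem \ref{T10} applies, together with the elementary $\mathbb{D}$-modulus computation $|h_{i}|\equiv 1$. Everything downstream is bookkeeping with idempotent arithmetic; the only other point that merits a sentence of justification is the identity $|\mu(E\cap C)|_{\mathbb{D}} = |\mu|_{\mathbb{D}}(E\cap C)$ (and its analogue on $D$), which holds precisely because $h$ has $\mathbb{D}$-modulus $\mathbf{e}_{1}+\mathbf{e}_{2}$ there.
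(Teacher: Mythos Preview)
Your proposal is correct and follows essentially the same route as the paper's proof: invoke Theorem~\ref{T10} to obtain $d\mu = h\,d|\mu|_{\mathbb{D}}$, argue that $h$ takes only the four values $\pm\mathbf{e}_{1}\pm\mathbf{e}_{2}$, define $A,B,C,D$ as the corresponding level sets, compute $\tfrac12(1+h)$ on each piece, and integrate. If anything you are slightly more explicit than the paper on two points it leaves implicit---the measurability of the partition sets, and the identity $\mathbf{e}_{1}\,|\mu|_{\mathbb{D}}(E\cap C)=\mathbf{e}_{1}\,|\mu(E\cap C)|_{\mathbb{D}}$---but the structure and the key idea are identical.
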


\subsection{Hyperbolic Invariant Measure}

\begin{definition}
Let $\mathfrak{B}$ be a Borel $\sigma -$algebra on a metric space $X.$ A $%
\mathbb{D}-$measure $\mu $ on $X$ is $\mathbb{D}-$finite if $\mu \left(
X\right) \prec _{\mathbb{D}}\infty _{\mathbb{D}}$ and $\mu $ is called a
Borel $\mathbb{D}-$probability measure if $\mu \left( X\right) =\mathbf{e}%
_{1}$ $+$ $\mathbf{e}_{2}$ or $\mathbf{e}_{1}$ or $\mathbf{e}_{2}.$
\end{definition}

For any $\mathbb{D}-$finite, nonzero, $\mathbb{D}-$measure $\widehat{\mu }$
on $X$ we may define a Borel $\mathbb{D}-$probability measure as%
\begin{equation*}
\mu \left( A\right) =\frac{\widehat{\mu }\left( A\right) }{\widehat{\mu }%
\left( X\right) }
\end{equation*}%
for all $A\in \mathfrak{B}.$

Throughout this section we will need a set $X$ equipped with a Borel $\sigma
-$algebra $\mathfrak{B}$ and a measurable function $f:X\rightarrow X.$ We
denote by $\mathcal{M}_{\mathbb{D}}$, the space of all Borel $\mathbb{D}-$%
probability measures on $X$.

\begin{definition}
A Borel $\mathbb{D}-$probability measure $\mu =\mu _{1}\mathbf{e}_{1}+\mu
_{2}\mathbf{e}_{2}$ on $X$ is said to be $\mathbb{D}-$invariant with respect
to a measurable function $f:X\rightarrow X$ if $\mu _{1}$ and $\mu _{2}$ are
invariant i.e., $\mu _{1}\left( f^{-1}\left( A\right) \right) =\mu
_{1}\left( A\right) $ and $\mu _{2}\left( f^{-1}\left( A\right) \right) =\mu
_{2}\left( A\right) $ for all $A\in \mathfrak{B}.$
\end{definition}

\begin{definition}[Push-forward of measures]
Let $f_{\ast }:\mathcal{M}_{\mathbb{D}}\longrightarrow \mathcal{M}_{\mathbb{D%
}}$ be the map from the space of $\mathbb{D}-$probability measures to
itself, defined by%
\begin{equation*}
f_{\ast }\mu \left( A\right) :=\mu \left( f^{-1}\left( A\right) \right) =\mu
_{1}\left( f^{-1}\left( A\right) \right) \mathbf{e}_{1}+\mu _{2}\left(
f^{-1}\left( A\right) \right) \mathbf{e}_{2}=f_{\ast }\mu _{1}\left(
A\right) \mathbf{e}_{1}+f_{\ast }\mu _{2}\left( A\right) \mathbf{e}_{2}.
\end{equation*}
\end{definition}

We call $f_{\ast }\mu $ the push forward of $\mu $ by $f.$

The mapping is well defined and $\mu $ is invarient iff $f_{\ast }\mu =\mu .$
For any $\mu \in \mathcal{M}_{\mathbb{D}}$ and any $i\geq 1$ we also let 
\begin{equation*}
f_{\ast }^{i}\mu \left( A\right) :=\mu \left( f^{-i}\left( A\right) \right) .
\end{equation*}

We now prove some properties of the map $f_{\ast }.$

\begin{lemma}
\label{L1}For all $\varphi \in L_{\mathbb{T}}^{1}\left( \mu \right) $ we
have $\int \varphi d(f_{\ast }\mu )=\int \varphi \circ fd\mu .$

\begin{proof}
First let $\varphi =\chi _{A}$ be the characteristic function of some $%
A\subseteq X.$ Then 
\begin{eqnarray*}
\int \chi _{A}d(f_{\ast }\mu ) &=&f_{\ast }\mu (A) \\
&=&\mu \left( f^{-1}\left( A\right) \right) \\
&=&\mu _{1}\left( f^{-1}\left( A\right) \right) \mathbf{e}_{1}+\mu
_{2}\left( f^{-1}\left( A\right) \right) \mathbf{e}_{2} \\
&=&\left( \int \chi _{f^{-1}(A)}d\mu _{1}\right) \mathbf{e}_{1}+\left( \int
\chi _{f^{-1}(A)}d\mu _{2}\right) \mathbf{e}_{2} \\
&=&\int (\chi _{A}\circ f)d\mu .
\end{eqnarray*}

So, the statement is true for characteristic functions and thus for simple
functions. Hence the statement is true for general integrable functions by
standard approximation arguments.
\end{proof}
\end{lemma}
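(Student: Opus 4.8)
The plan is to establish the identity by the classical ``layer-cake'' extension argument, carried out component-wise in the idempotent coordinates. Write $\varphi=\varphi_{1}\mathbf{e}_{1}+\varphi_{2}\mathbf{e}_{2}$ and $\mu=\mu_{1}\mathbf{e}_{1}+\mu_{2}\mathbf{e}_{2}$, so that by the definition of the push-forward one has $f_{\ast}\mu=(f_{\ast}\mu_{1})\mathbf{e}_{1}+(f_{\ast}\mu_{2})\mathbf{e}_{2}$ and
\[
\int\varphi\,d(f_{\ast}\mu)=\Bigl(\int\varphi_{1}\,d(f_{\ast}\mu_{1})\Bigr)\mathbf{e}_{1}+\Bigl(\int\varphi_{2}\,d(f_{\ast}\mu_{2})\Bigr)\mathbf{e}_{2},
\]
while $\varphi\circ f=(\varphi_{1}\circ f)\mathbf{e}_{1}+(\varphi_{2}\circ f)\mathbf{e}_{2}$ yields the analogous expression for $\int\varphi\circ f\,d\mu$. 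Hence it suffices to prove, for each $i=1,2$, the scalar identity $\int\varphi_{i}\,d(f_{\ast}\mu_{i})=\int\varphi_{i}\circ f\,d\mu_{i}$ and then recombine with $\mathbf{e}_{1},\mathbf{e}_{2}$; note that $f_{\ast}\mu_{i}$ is a positive measure because $\mu_{i}$ is, so every integral written above makes sense.

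For the scalar identity I would proceed in the usual four layers. First, for $\varphi_{i}=\chi_{A}$ with $A\in\mathfrak{B}$, the computation displayed in the sketch gives $\int\chi_{A}\,d(f_{\ast}\mu_{i})=f_{\ast}\mu_{i}(A)=\mu_{i}(f^{-1}(A))=\int\chi_{f^{-1}(A)}\,d\mu_{i}=\int(\chi_{A}\circ f)\,d\mu_{i}$, using $\chi_{A}\circ f=\chi_{f^{-1}(A)}$. Second, by $\mathbb{C}$-linearity of the integral the identity passes to complex simple functions. Third, for a nonnegative measurable $\varphi_{i}$ choose simple functions $s_{n}\uparrow\varphi_{i}$; then $s_{n}\circ f\uparrow\varphi_{i}\circ f$, and two applications of the monotone convergence theorem together with the simple-function case give the identity for $\varphi_{i}$; applying this to $|\varphi_{i}|$ shows $\int|\varphi_{i}|\,d(f_{\ast}\mu_{i})=\int|\varphi_{i}\circ f|\,d\mu_{i}$, so $\varphi_{i}\in L^{1}(f_{\ast}\mu_{i})$ if and only if $\varphi_{i}\circ f\in L^{1}(\mu_{i})$. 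Fourth, splitting $\varphi_{i}$ into real and imaginary parts and each of these into positive and negative parts, applying the third step to the four nonnegative pieces, and using linearity finishes the scalar case. Reassembling via the $\mathbf{e}_{1},\mathbf{e}_{2}$ decomposition then gives $\varphi\circ f\in L_{\mathbb{T}}^{1}(\mu)$ and the asserted equality.

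I do not expect any genuine obstacle here; the whole content is to make the phrase ``standard approximation arguments'' explicit. The only point deserving care is the integrability bookkeeping in the third step, namely the equality $\int|\varphi_{i}|\,d(f_{\ast}\mu_{i})=\int|\varphi_{i}\circ f|\,d\mu_{i}$, which is what guarantees that $\varphi\circ f$ actually lies in $L_{\mathbb{T}}^{1}(\mu)$ so that the right-hand side is defined. If one prefers to avoid invoking monotone convergence directly, the third step can instead be obtained from the already proven bicomplex Lebesgue Dominated Convergence Theorem, using $|\varphi_{i}|$ (respectively $|\varphi_{i}\circ f|$) as the dominating function for truncated approximants; either route is routine.
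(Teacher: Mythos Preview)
Your proposal is correct and follows essentially the same approach as the paper: verify the identity on characteristic functions and extend by linearity and approximation. The only organizational difference is that you reduce to the scalar components $\varphi_{i},\mu_{i}$ at the outset and then run the four-step extension there, whereas the paper keeps the bicomplex integral together for the characteristic-function computation and then simply invokes ``standard approximation arguments''; your write-up is thus a more detailed version of the same proof, with the added benefit that you explicitly verify $\varphi\circ f\in L_{\mathbb{T}}^{1}(\mu)$.
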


\begin{corollary}
$f_{\ast }:\mathcal{M}_{\mathbb{D}}\longrightarrow \mathcal{M}_{\mathbb{D}}$
is continuous.

\begin{proof}
Consider a sequence $\mu _{n}\longrightarrow \mu $ in $\mathcal{M}_{\mathbb{D%
}}$. Then, by Lemma $\ref{L1}$, for any continuous function $\varphi
:X\longrightarrow \mathbb{D}$ we have%
\begin{equation*}
\int \varphi d(f_{\ast }\mu _{n})=\int \varphi \circ fd\mu
_{n}\longrightarrow \int \varphi \circ fd\mu =\int \varphi d(f_{\ast }\mu )
\end{equation*}

which implies $f_{\ast }\mu _{n}\longrightarrow f_{\ast }\mu .$ Hence $%
f_{\ast }$ is continuous.
\end{proof}
\end{corollary}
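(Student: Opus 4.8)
The plan is to read ``continuity'' as continuity for the weak-$\ast$ topology that $\mathcal{M}_{\mathbb{D}}$ visibly carries in this section, i.e.\ the one in which $\mu_n\to\mu$ means $\int\varphi\,d\mu_n\to\int\varphi\,d\mu$ for every continuous $\varphi:X\to\mathbb{D}$ (continuous test functions being bounded in the relevant setting of a compact $X$). Writing $\varphi=\varphi_1\mathbf{e}_1+\varphi_2\mathbf{e}_2$ and $\mu=\mu_1\mathbf{e}_1+\mu_2\mathbf{e}_2$, this convergence is the same as ordinary scalar weak-$\ast$ convergence $\mu_{n,1}\to\mu_1$ and $\mu_{n,2}\to\mu_2$ in each idempotent component, so the classical theory is available componentwise. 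Since $X$ is a metric space this topology is metrizable, so it suffices to prove \emph{sequential} continuity. I would also record at the outset that $f_\ast$ indeed maps $\mathcal{M}_{\mathbb{D}}$ into itself: each $f_\ast\mu_i=\mu_i\circ f^{-1}$ is again a Borel probability measure, hence $f_\ast\mu=f_\ast\mu_1\,\mathbf{e}_1+f_\ast\mu_2\,\mathbf{e}_2\in\mathcal{M}_{\mathbb{D}}$.

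The core of the argument is short. Fix a convergent sequence $\mu_n\to\mu$ in $\mathcal{M}_{\mathbb{D}}$ and an arbitrary continuous $\varphi:X\to\mathbb{D}$. Because $f:X\to X$ is continuous, $\varphi\circ f:X\to\mathbb{D}$ is again continuous (and bounded), so it belongs to $L_{\mathbb{T}}^1(\nu)$ for every $\nu\in\mathcal{M}_{\mathbb{D}}$, and Lemma~\ref{L1} applies both to $\mu_n$ and to $\mu$:
\begin{equation*}
\int\varphi\,d(f_\ast\mu_n)=\int\varphi\circ f\,d\mu_n,\qquad \int\varphi\,d(f_\ast\mu)=\int\varphi\circ f\,d\mu .
\end{equation*}
Now $\varphi\circ f$ is a \emph{fixed} continuous test function, so $\mu_n\to\mu$ yields $\int\varphi\circ f\,d\mu_n\to\int\varphi\circ f\,d\mu$ (split into the $\mathbf{e}_1$- and $\mathbf{e}_2$-parts and apply scalar weak-$\ast$ convergence in each). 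Combining the two identities gives $\int\varphi\,d(f_\ast\mu_n)\to\int\varphi\,d(f_\ast\mu)$.

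Since $\varphi$ was arbitrary, this is precisely $f_\ast\mu_n\to f_\ast\mu$ in $\mathcal{M}_{\mathbb{D}}$; sequential continuity of $f_\ast$ follows, hence continuity by metrizability of the weak-$\ast$ topology. I expect the only point that genuinely needs care is that the change-of-variables step requires $\varphi\circ f$ to be a \emph{continuous} test function, which quietly uses continuity of $f$: mere measurability of $f$ would not let one pass $\mu_n\to\mu$ through the composition, and one would then have to pass to a stronger topology (e.g.\ setwise convergence) on $\mathcal{M}_{\mathbb{D}}$. The remaining ingredients — that weak-$\ast$ convergence decomposes along the $\mathbf{e}_1,\mathbf{e}_2$ components and that Lemma~\ref{L1} supplies the identity $\int\varphi\,d(f_\ast\mu)=\int\varphi\circ f\,d\mu$ — are routine.
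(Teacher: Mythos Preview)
Your argument is correct and follows exactly the same route as the paper's proof: take a sequence $\mu_n\to\mu$, apply Lemma~\ref{L1} to rewrite $\int\varphi\,d(f_\ast\mu_n)$ as $\int\varphi\circ f\,d\mu_n$, pass to the limit, and apply Lemma~\ref{L1} again. Your additional remark that the step $\int\varphi\circ f\,d\mu_n\to\int\varphi\circ f\,d\mu$ tacitly requires $f$ to be continuous (so that $\varphi\circ f$ is a legitimate test function for weak-$\ast$ convergence) is a worthwhile clarification that the paper glosses over, since the section's standing hypothesis only asks $f$ to be measurable.
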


\begin{corollary}
\label{C2}$\mu $ is $\mathbb{D-}$invariant with respect to a measurable
function $f:X\longrightarrow X$ if and only if $\int \varphi \circ fd\mu
=\int \varphi d\mu $ for all continuous function $\varphi :X\longrightarrow 
\mathbb{D}$.

\begin{proof}
$\varphi =\varphi _{1}\mathbf{e}_{1}+\varphi _{2}\mathbf{e}_{2}$ is
continuous $\Longleftrightarrow \varphi _{1}$ and $\varphi _{2}$ are
continuous.

Suppose first that $\mu $ is $\mathbb{D-}$invariant. Then $f_{\ast }\mu =\mu
.$ Now using Lemma $\ref{L1},$ we have%
\begin{equation*}
\int \varphi \circ fd\mu =\int \varphi df_{\ast }\mu =\int \varphi d\mu .
\end{equation*}

For the converse, we have 
\begin{equation*}
\int \varphi d\mu =\int \varphi \circ fd\mu =\int \varphi d(f_{\ast }\mu ).
\end{equation*}

which implies 
\begin{equation*}
\left( \int \varphi _{1}d\mu _{1}\right) \mathbf{e}_{1}+\left( \int \varphi
_{2}d\mu _{2}\right) \mathbf{e}_{2}=\left( \int \varphi _{1}df_{\ast }\mu
_{1}\right) \mathbf{e}_{1}+\left( \int \varphi _{2}df_{\ast }\mu _{2}\right) 
\mathbf{e}_{2}.
\end{equation*}

So,%
\begin{equation*}
\int \varphi _{1}d\mu _{1}=\int \varphi _{1}df_{\ast }\mu _{1}
\end{equation*}

and%
\begin{equation*}
\int \varphi _{2}d\mu _{2}=\int \varphi _{2}df_{\ast }\mu _{2}
\end{equation*}

for every continuous function $\varphi _{1},\varphi _{2}:X\longrightarrow 
\mathbb{R}
.$

By the Riesz Representation Theorem, measures correspond to linear
functionals and therefore this can be restated as saying that 
\begin{equation*}
\mu _{1}(\varphi _{1})=f_{\ast }\mu _{1}(\varphi _{1})
\end{equation*}

and%
\begin{equation*}
\mu _{2}(\varphi _{2})=f_{\ast }\mu _{2}(\varphi _{2})
\end{equation*}

for all continuous function $\varphi _{1},\varphi _{2}:X\longrightarrow 
\mathbb{R}
.$

Hence 
\begin{equation*}
\mu _{1}=f_{\ast }\mu _{1}\text{ and }\mu _{2}=f_{\ast }\mu _{2}
\end{equation*}

and so 
\begin{equation*}
f_{\ast }\mu =\mu .
\end{equation*}

Therefore $\mu $ is $\mathbb{D-}$invariant.
\end{proof}
\end{corollary}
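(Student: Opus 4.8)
The plan is to reduce the bicomplex statement to its two real idempotent components and then invoke the classical characterization of invariance via integration against continuous functions, together with the Riesz representation theorem. Writing $\mu=\mu_1\mathbf{e}_1+\mu_2\mathbf{e}_2$ and $\varphi=\varphi_1\mathbf{e}_1+\varphi_2\mathbf{e}_2$, I would first record that $\varphi:X\to\mathbb{D}$ is continuous if and only if $\varphi_1,\varphi_2:X\to\mathbb{R}$ are continuous, and that by the definition of the bicomplex integral, $\int\varphi\circ f\,d\mu=\bigl(\int\varphi_1\circ f\,d\mu_1\bigr)\mathbf{e}_1+\bigl(\int\varphi_2\circ f\,d\mu_2\bigr)\mathbf{e}_2$, with the analogous decomposition for $\int\varphi\,d\mu$.

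For the forward implication, assume $\mu$ is $\mathbb{D}$-invariant, so $f_\ast\mu=\mu$. Applying Lemma~\ref{L1} directly gives $\int\varphi\circ f\,d\mu=\int\varphi\,d(f_\ast\mu)=\int\varphi\,d\mu$ for every continuous $\varphi:X\to\mathbb{D}$; no component splitting is even needed here. For the converse, suppose $\int\varphi\circ f\,d\mu=\int\varphi\,d\mu$ for all continuous $\varphi:X\to\mathbb{D}$. Using Lemma~\ref{L1} again, $\int\varphi\,d(f_\ast\mu)=\int\varphi\circ f\,d\mu=\int\varphi\,d\mu$. Decomposing into idempotent components, this forces $\int\varphi_1\,d(f_\ast\mu_1)=\int\varphi_1\,d\mu_1$ and $\int\varphi_2\,d(f_\ast\mu_2)=\int\varphi_2\,d\mu_2$ for all continuous real-valued $\varphi_1,\varphi_2$; since every continuous real function arises as a component (take $\varphi=\varphi_1\mathbf{e}_1+0\cdot\mathbf{e}_2$, etc.), these equalities hold for all continuous test functions on $X$.

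The last step is to upgrade equality of integrals against all continuous functions to equality of measures. By the Riesz representation theorem, a (finite Borel, or regular Borel on a suitable metric space) measure is determined by the positive linear functional it induces on $C(X)$, so $\int\varphi_i\,d(f_\ast\mu_i)=\int\varphi_i\,d\mu_i$ for all $\varphi_i\in C(X,\mathbb{R})$ yields $f_\ast\mu_i=\mu_i$ for $i=1,2$, hence $f_\ast\mu=f_\ast\mu_1\mathbf{e}_1+f_\ast\mu_2\mathbf{e}_2=\mu_1\mathbf{e}_1+\mu_2\mathbf{e}_2=\mu$, which by definition means $\mu$ is $\mathbb{D}$-invariant.

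The only genuine subtlety — the ``hard part'' — is the invocation of Riesz representation: it requires $X$ to carry enough topological structure (compact metric, or at least so that finite Borel measures are uniquely determined by their action on $C(X)$), and the continuous functions must be a rich enough separating family. Since the ambient setup already posits $X$ a metric space with its Borel $\sigma$-algebra and $\mathcal{M}_{\mathbb{D}}$ the Borel $\mathbb{D}$-probability measures, I would either assume $X$ compact (as is standard in this part of ergodic theory, following the cited sources \cite{Via}) or note that finiteness of the measures suffices together with regularity; everything else is bookkeeping across the two components.
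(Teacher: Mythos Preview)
Your proposal is correct and follows essentially the same route as the paper: both directions use Lemma~\ref{L1} to pass between $\int\varphi\circ f\,d\mu$ and $\int\varphi\,d(f_\ast\mu)$, then the converse is finished by splitting into idempotent components and invoking the Riesz representation theorem to identify $\mu_i$ with $f_\ast\mu_i$. Your version is in fact slightly more careful than the paper's in two places: you explicitly justify that every continuous real test function arises as a component (via $\varphi=\varphi_1\mathbf{e}_1+0\cdot\mathbf{e}_2$), and you flag the topological hypothesis on $X$ needed for the Riesz step, which the paper leaves implicit.
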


We now prove a general result which gives conditions to guarantee that
atleast some $\mathbb{D-}$invariant $\mathbb{D}-$probability measure exists.

Let 
\begin{equation*}
\mathcal{M}_{\mathbb{D}}\left( f\right) =\{\mu \in \mathcal{M}_{\mathbb{D}}%
\text{: }\mu \text{ is }\mathbb{D-}\text{invariant with respect to }f\text{%
\}.}
\end{equation*}

\begin{theorem}
Suppose $M$ is compact metric space and $f:M\longrightarrow M$ is
continuous. Then $\mathcal{M}_{\mathbb{D}}\left( f\right) $ is non-empty,
convex, compact.

\begin{proof}
Krylov-Boguliobov Theorem $\cite{Luz}$ states that if $M$ is compact metric
space and $f:M\longrightarrow M$ is continuous, then the set $\{\mu ^{\ast
}\in \mathcal{M}{}^{\ast }$: $\mu ^{\ast }$ is $f-$invariant\} is non-empty,
where $\mu ^{\ast }$ is probability measure and $\mathcal{M}{}^{\ast }$ is
the space of probability measures.

It is clear that 
\begin{equation*}
\{\mu ^{\ast }\in \mathcal{M}{}^{\ast }:\mu ^{\ast }\text{ is }f-\text{
invariant}\}\subseteq \mathcal{M}_{\mathbb{D}}\left( f\right) .
\end{equation*}

Hence $\mathcal{M}_{\mathbb{D}}\left( f\right) $ is non-empty.

Now let $\mu ^{1},\mu ^{2}\in \mathcal{M}_{\mathbb{D}}\left( f\right) $ .
Then $\mu ^{1}(f^{-1}(A))=\mu ^{1}(A)$ and $\mu ^{2}(f^{-1}(A))=\mu ^{2}(A),$
for all $A\in \mathfrak{B}.$

Now for $t\in \lbrack 0,1],$ let $\mu =t\mu ^{1}+(1-t)\mu ^{2}.$

Then 
\begin{eqnarray*}
\mu (f^{-1}(A)) &=&t\mu ^{1}(f^{-1}(A))+(1-t)\mu ^{2}(f^{-1}(A)) \\
&=&t\mu ^{1}(A)+(1-t)\mu ^{2}(A) \\
&=&\mu (A).
\end{eqnarray*}

Hence $\mathcal{M}_{\mathbb{D}}\left( f\right) $ is convex.

To show compactness, suppose that $\mu _{n}$ is a sequence in $\mathcal{M}_{%
\mathbb{D}}\left( f\right) $ \ converging to some $\mu \in $ $\mathcal{M}_{%
\mathbb{D}}$. Then by Lemma $\ref{L1}$ we have, for any continuous function $%
\varphi ,$ that $\int \varphi \circ fd\mu =\lim\limits_{n\rightarrow \infty
}\int \varphi \circ fd\mu _{n}=\lim\limits_{n\rightarrow \infty }\int
\varphi d\mu _{n}=\int \varphi d\mu .$ Therefore by Corollary $\ref{C2}$, $%
\mu $ is $\mathbb{D-}$invariant and so $\mu \in $ $\mathcal{M}_{\mathbb{D}%
}(f)$.
\end{proof}
\end{theorem}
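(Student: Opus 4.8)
The plan is to reduce each of the three assertions to its classical real counterpart through the idempotent decomposition $\mu =\mu _{1}\mathbf{e}_{1}+\mu _{2}\mathbf{e}_{2}$, using the fact that $\mu \in \mathcal{M}_{\mathbb{D}}\left( f\right) $ precisely when the two real components $\mu _{1}$ and $\mu _{2}$ are each $f$-invariant Borel probability measures on $M$ in the ordinary sense.

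\textbf{Non-emptiness.} First I would invoke the classical Krylov--Bogolyubov theorem (as quoted in the excerpt): since $M$ is a compact metric space and $f$ is continuous, there is an $f$-invariant Borel probability measure $\mu ^{\ast }$ on $M$. Then $\mu =\mu ^{\ast }\mathbf{e}_{1}+\mu ^{\ast }\mathbf{e}_{2}$ is a Borel $\mathbb{D}$-probability measure with $\mu \left( M\right) =\mathbf{e}_{1}+\mathbf{e}_{2}$, and since each component equals the invariant measure $\mu ^{\ast }$, it is $\mathbb{D}$-invariant; hence $\mathcal{M}_{\mathbb{D}}\left( f\right) \neq \emptyset $. (More generally, any pair of classical $f$-invariant probability measures produces an element of $\mathcal{M}_{\mathbb{D}}\left( f\right) $.) For convexity, given $\mu ^{1},\mu ^{2}\in \mathcal{M}_{\mathbb{D}}\left( f\right) $ and a real parameter $t\in \lbrack 0,1]$, set $\mu =t\mu ^{1}+(1-t)\mu ^{2}$. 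Then $\mu $ is again a Borel $\mathbb{D}$-probability measure and, for every $A\in \mathfrak{B}$,
\begin{equation*}
\mu \left( f^{-1}(A)\right) =t\mu ^{1}\left( f^{-1}(A)\right) +(1-t)\mu ^{2}\left( f^{-1}(A)\right) =t\mu ^{1}(A)+(1-t)\mu ^{2}(A)=\mu (A),
\end{equation*}
so $\mu \in \mathcal{M}_{\mathbb{D}}\left( f\right) $; the same computation works componentwise through the $\mathbf{e}_{1},\mathbf{e}_{2}$ parts if one prefers.

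\textbf{Compactness.} I would first record that the ambient space $\mathcal{M}_{\mathbb{D}}$ is itself compact: identifying a $\mathbb{D}$-probability measure with the pair of its components, $\mathcal{M}_{\mathbb{D}}$ embeds homeomorphically into the product of two copies of the space of ordinary Borel probability measures on $M$, each of which is weak-$\ast$ compact and metrizable because $M$ is compact metric. It then suffices to prove that $\mathcal{M}_{\mathbb{D}}\left( f\right) $ is closed in $\mathcal{M}_{\mathbb{D}}$. If $\mu _{n}\rightarrow \mu $ in $\mathcal{M}_{\mathbb{D}}$ with $\mu _{n}\in \mathcal{M}_{\mathbb{D}}\left( f\right) $, then by Lemma \ref{L1} together with the invariance of each $\mu _{n}$, for every continuous $\varphi :M\rightarrow \mathbb{D}$ we get $\int \varphi \circ f\,d\mu =\lim_{n\rightarrow \infty }\int \varphi \circ f\,d\mu _{n}=\lim_{n\rightarrow \infty }\int \varphi \,d\mu _{n}=\int \varphi \,d\mu $, and Corollary \ref{C2} then gives that $\mu $ is $\mathbb{D}$-invariant, so $\mu \in \mathcal{M}_{\mathbb{D}}\left( f\right) $. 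A closed subset of a compact space is compact.

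The routine parts are non-emptiness and convexity. The step needing the most care is compactness: one must pin down the meaning of convergence in $\mathcal{M}_{\mathbb{D}}$ (componentwise weak-$\ast$ convergence, equivalently $\int \varphi \,d\mu _{n}\rightarrow \int \varphi \,d\mu $ for all continuous $\mathbb{D}$-valued $\varphi $), verify that $\mathcal{M}_{\mathbb{D}}$ is genuinely compact via the classical Banach--Alaoglu/Riesz machinery applied to each component, and confirm that the normalization defining a $\mathbb{D}$-probability measure (total mass $\mathbf{e}_{1}+\mathbf{e}_{2}$, or $\mathbf{e}_{1}$, or $\mathbf{e}_{2}$) survives passage to the limit, which it does since testing against the constant function $1$ is a continuous operation.
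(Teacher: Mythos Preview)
Your proof is correct and follows essentially the same approach as the paper: Krylov--Bogolyubov for non-emptiness, the direct convex-combination computation for convexity, and Lemma~\ref{L1} together with Corollary~\ref{C2} to show that a limit of $\mathbb{D}$-invariant measures is $\mathbb{D}$-invariant. You are in fact more careful than the paper in the compactness step, explicitly recording that the ambient space $\mathcal{M}_{\mathbb{D}}$ is compact (via the componentwise identification with a product of weak-$\ast$ compact spaces of ordinary probability measures) so that closedness suffices; the paper's argument verifies only closedness and leaves the compactness of $\mathcal{M}_{\mathbb{D}}$ implicit.
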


\end{document}